\documentclass[11pt,a4paper,twoside,reqno]{amsart}
\usepackage{amsfonts} %
\usepackage{amsmath} %
\usepackage{amssymb} %
\usepackage{amsthm} %
\usepackage{enumerate} %
\usepackage{bbm} %
\usepackage[T1]{fontenc} %
\usepackage[latin9]{inputenc} %
\usepackage{ae,aecompl} %
\usepackage{graphicx} %
\usepackage{nicefrac} %
\usepackage{color} %

\textheight=23cm \textwidth=15.5cm \oddsidemargin=0cm
\evensidemargin=0cm \topmargin=0cm

\newtheorem{theorem}{Theorem}[section] %
\newtheorem{corollary}[theorem]{Corollary} %
\newtheorem{lemma}[theorem]{Lemma} %
\newtheorem{proposition}[theorem]{Proposition} %
{\theoremstyle{remark} %
  \newtheorem{remark}[theorem]{Remark}} %
{\theoremstyle{definition} %
  \newtheorem{definition}[theorem]{Definition} %
  \newtheorem{example}[theorem]{Example} %
}

\newcommand{\PP}[0]{\ensuremath{\mathbb{P}}}
\newcommand{\CC}[0]{\ensuremath{\mathbb{C}}}
\newcommand{\ZZ}[0]{\ensuremath{\mathbb{Z}}}

\newcommand{\QQ}[0]{\ensuremath{\mathbb{Q}}}

\newcommand{\AAA}[0]{\ensuremath{\mathcal{A}}}

\newcommand{\EE}[0]{\ensuremath{\mathcal{E}}}
\newcommand{\JJ}[0]{\ensuremath{\mathcal{J}}}

\newcommand{\aut}[0]{\ensuremath{\operatorname{Aut}}}
\newcommand{\lin}[0]{\ensuremath{\operatorname{Lin}}}

\newcommand{\diag}[0]{\ensuremath{\operatorname{diag}}}
\newcommand{\PGL}[0]{\ensuremath{\operatorname{PGL}}}
\newcommand{\GL}[0]{\ensuremath{\operatorname{GL}}}

\newcommand{\sing}[0]{\ensuremath{\operatorname{Sing}}}

\begin{document}

\title{On the order of an automorphism of a smooth hypersurface}

\author{V\'\i ctor Gonz\'alez-Aguilera and Alvaro Liendo}
\address{Departamento de Matem\'aticas, Universidad T\'ecnica
  Fe\-de\-ri\-co San\-ta Ma\-r\'\i a, Ca\-si\-lla 110-V, Valpara\'\i
  so, Chile.}  \email{victor.gonzalez@usm.cl}

\address{Mathematisches Institut, Universit\"at Bern, Sidlerstrasse
  5, CH--3012 Bern, Switzerland.}
\email{alvaro.liendo@gmail.com}

\date{\today}

\thanks{{\it 2000 Mathematics Subject
    Classification}: Primary 14J40; Secondary 14J30.\\
  \mbox{\hspace{11pt}}{\it Key words}: Automorphisms of hypersurfaces,
  linear automorphisms, cyclotomic polynomials, principally
  polarized abelian varieties.\\
  \mbox{\hspace{11pt}}The first author was partially supported by the
  Fondecyt project 1110516 and the Dgip of the UTFSM}

\begin{abstract}
  In this paper we give an effective criterion as to when a positive
  integer $q$ is the order of an automorphism of a smooth hypersurface
  of dimension $n$ and degree $d$, for every $d\geq 3$, $n\geq 2$,
  $(n,d)\neq (2,4)$, and $\gcd(q,d)=\gcd(q,d-1)=1$. This allows us to
  give a complete criterion in the case where $q=p$ is a prime
  number. In particular, we show the following result: If $X$ is a
  smooth hypersurface of dimension $n$ and degree $d$ admitting an
  automorphism of prime order $p$ then $p<(d-1)^{n+1}$; and if
  $p>(d-1)^n$ then $X$ is isomorphic to the Klein hypersurface, $n=2$
  or $n+2$ is prime, and $p=\Phi_{n+2}(1-d)$ where $\Phi_{n+2}$ is the
  $(n+2)$-th cyclotomic polynomial. Finally, we provide some
  applications to intermediate jacobians of Klein hypersurfaces.
\end{abstract}

\maketitle

\section*{Introduction}

Let $n\geq 2$ and $d\geq 3$ be integers. The smooth hypersurfaces of
degree $d$ of the projective space $\PP^{n+1}=\PP^{n+1}(\CC)$ are
classical objects in algebraic geometry. In the following we assume
that $(n,d)\neq (2,4)$.  In this case, the group of regular
automorphisms of any such hypersurface $X$ is finite and equal to the
group of linear automorphisms i.e., every automorphism of $X$ extends
to an automorphism of $\PP^{n+1}$ \cite{mat}. In this paper we study
the possible orders of automorphisms of smooth hypersurfaces.

In Section~\ref{sec-general} we give the following criterion for the
order of an automorphism of a smooth hypersurface: an integer number
$q\in \ZZ_{> 0}$, with $\gcd(q,d)=\gcd(q,d-1)=1$, is the order of
an automorphism of a smooth hypersurface of dimension $n$ and degree
$d$ if an only if
\begin{align}
  \label{res-intro}
  \exists\ell\in\{1,\ldots,n+2\}\quad\mbox{such that}\quad
  (1-d)^\ell\equiv 1\mod q\,.
\end{align}

In Section~\ref{sec-prime} we show that if $q$ is a prime number then
$q$ is the order of an automorphism of a smooth hypersurface of
dimension $n$ and degree $d$ if and only if $q$ divides $d-1$ or
\eqref{res-intro} holds. This is a generalization of our previous
result for cubic hypersurfaces \cite{GL}.

In Section~\ref{sec-klein} we show that if $X$ is a smooth
hypersurface of dimension $n$ and degree $d$ admitting an automorphism
of prime order $p$ then $p<(d-1)^{n+1}$; and if $p>(d-1)^n$ then $X$
is isomorphic to the Klein hypersurface, $n=2$ or $n+2$ is prime, and
$p=\Phi_{n+2}(1-d)$ where $\Phi_{n+2}$ is the $(n+2)$-th cyclotomic
polynomial.

If we restrict to linear automorphism, the results in
Sections~\ref{sec-general}, \ref{sec-prime} and \ref{sec-klein} hold
also in the excluded cases where $n=1$ and $d\geq 3$, or
$(n,d)=(2,4)$.

Finally, in Section~\ref{sec-app} we provide some applications to
principally polarized abelian varieties that are the intermediate
jacobians of Klein hypersurfaces.

\smallskip

The authors are grateful to A.G. Gorinov for pointing us to his
article, to Pierre-Marie Poloni for showing us Zsigmondy's Theorem,
and to the referee for valuable suggestions.

\section{Orders of automorphisms of smooth hypersurfaces of
  $\PP^{n+1}$} \label{sec-general}

Let $n\geq 2$ and $d\geq 3$ be integers, and $(n,d)\neq (2,4)$. In
this section we give a criterion as to when a positive integer $q$,
with $\gcd(q,d)=\gcd(q,d-1)=1$, appears as the order of an
automorphism of some smooth hypersurface of degree $d$ in $\PP^{n+1}$.

Let $V$ be a vector space over $\CC$ of dimension $n+2$, $n\geq2$ with
a fixed basis, and let $\PP^{n+1}=\PP(V)$ be the corresponding
projective space. We also let $\{x_0,\ldots,x_{n+1}\}$ be the dual
basis of the linear forms on $V$ so that $\{x_{i_1}\cdots x_{i_d}\mid
0\leq i_1\leq\ldots\leq i_d \leq n+1\}$ is a basis of the vector space
$S^d(V^*)$ of forms of degree $d$ on $V$.

For a form $F\in S^d(V^*)$, we denote by $X=V(F)\subseteq\PP^{n+1}$
the corresponding hypersurface of dimension $n$ and degree $d$. We
denote by $\aut(X)$ the group of regular automorphisms of $X$ and by
$\lin(X)$ the subgroup of $\aut(X)$ that extends to automorphisms of
$\PP^{n+1}$. Since $(n,d)\neq (2,4)$, by \cite[Theorems 1 and 2]{mat}
if $X$ is smooth then
$$\aut(X)=\lin(X) \quad\mbox{and}\quad |\aut(X)|<\infty.$$

In this setting $\aut(X)<\PGL(V)$ and for any automorphism in
$\aut(X)$ we can choose a representative in $\GL(V)$. This
automorphism induces an automorphism of $S^d(V^*)$ such that
$\varphi(F)=\lambda F$, $\lambda\in \CC^*$. These three automorphisms
will be denoted by the same letter $\varphi$.

In this paper we consider automorphisms of finite order $q$. In this
case, multiplying by an appropriate constant, we can assume that
$\varphi^q=\operatorname{Id}_V$, so that $\varphi$ is also a linear
automorphism of order $q$ of $V$ and $\varphi(F)=\xi^a F$ where $\xi$
is a primitive $q$-th root of unity. Furthermore, we can apply a
linear change of coordinates on $V$ to diagonalize $\varphi$, so that
$$\varphi:V\rightarrow V,\qquad
(\alpha_0,\ldots,\alpha_{n+1})\mapsto(\xi^{\sigma_0}\alpha_0,
\ldots,\xi^{\sigma_{n+1}}\alpha_{n+1}), \qquad 0\leq\sigma_i<q.$$

\begin{definition}
  Letting $\ZZ_q$ be the ring of integers modulo $q$, we define the
  signature $\sigma$ of an automorphism $\varphi$ as above by
  $$\sigma=(\sigma_{0},\ldots,\sigma_{n+1})\in \ZZ_q^{n+2}\,,$$ where we
  identify $\sigma_i$ with its class in the ring $\ZZ_q$. We also
  denote $\varphi=\diag(\sigma)$ and we say that $\varphi$ is a
  diagonal automorphism.
\end{definition}

For every $F\in S^d(V^*)$ and $i\in\{0,\ldots,n+1\}$, we let
$\deg_i(F)$ denote the degree of $F$ seen as a polynomial in
$x_i$. The following simple lemma is a key ingredient in the proof of
Theorem~\ref{resultado}.

\begin{lemma} \label{base} Let $X$ be a hypersurface of dimension $n$
  and degree $d$, given by the homogeneous form $F\in S^d(V^*)$. If
  $\deg_i(F)\leq d-2$, for some $i\in\{0,\ldots,n+1\}$, then $X$ is
  singular.
\end{lemma}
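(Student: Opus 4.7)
The plan is to exhibit an explicit singular point of $X$ when $F$ has low degree in one of the variables. Up to reindexing, assume $i=0$, so $\deg_0(F)\leq d-2$. I consider the point $P=[1:0:\cdots:0]\in\PP^{n+1}$ corresponding to the first basis vector, and I claim that $P$ lies on $X$ and that all partial derivatives of $F$ vanish at $P$.

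The key observation is that the assumption $\deg_0(F)\leq d-2$ means that when $F$ is expanded in the monomial basis $\{x_{i_1}\cdots x_{i_d}\}$, neither the monomial $x_0^d$ nor any monomial of the form $x_0^{d-1}x_k$ with $k\geq 1$ appears (since both have degree $d-1$ or $d$ in $x_0$). Writing $F=\sum_{j=0}^{d-2}x_0^{j}G_{d-j}(x_1,\ldots,x_{n+1})$ with each $G_{d-j}$ homogeneous of positive degree $d-j\geq 2$ in $x_1,\ldots,x_{n+1}$, I evaluate $F$ and its partials at $P$. Each $G_{d-j}$ vanishes at the origin of $\{x_1=\cdots=x_{n+1}=0\}$, so $F(P)=0$; similarly $\partial F/\partial x_0$ at $P$ is a combination of values $G_{d-j}(0,\ldots,0)=0$; and for $k\geq 1$, $\partial F/\partial x_k$ at $P$ equals $\partial G_d/\partial x_k$ at the origin, which vanishes because $G_d$ is homogeneous of degree $d\geq 2$.

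Hence $P\in X$ is a point where the Jacobian of $F$ is identically zero, so $P$ is a singular point of $X$ by the Jacobian criterion, contradicting smoothness. I do not foresee any real obstacle here: the content of the lemma is essentially that the absence of both the $x_0^d$ and $x_0^{d-1}x_k$ terms forces the coordinate point $[1:0:\cdots:0]$ to be singular, and the computation is immediate from the homogeneous decomposition of $F$ in $x_0$.
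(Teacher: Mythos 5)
Your proof is correct and follows essentially the same route as the paper's: decompose $F$ by powers of $x_0$ and check via the Jacobian criterion that $[1:0:\cdots:0]$ is a singular point of $X$. One tiny slip: for $k\geq 1$ the value of $\partial F/\partial x_k$ at $P$ is the full sum $\sum_{j=0}^{d-2}\partial G_{d-j}/\partial x_k(0,\ldots,0)$, not just the $j=0$ term, but every summand vanishes for the reason you already give (each $G_{d-j}$ is homogeneous of degree $d-j\geq 2$ in $x_1,\ldots,x_{n+1}$), so the conclusion stands.
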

\begin{proof}
  After a linear change of coordinates, we may and will assume
  that $\deg_0(F)\leq d-2$ so that
  $$F=x_0^{d-2}L_2+x_0^{d-3}L_3+\ldots+x_0L_{d-1}+L_d\,,$$
  where $L_j$ is a form of degree $j$ in the variables
  $\{x_1,\ldots,x_{n+1}\}$. Hence,
  \begin{align*}
    &\frac{\partial F}{\partial x_0}=(d-2)x_0^{d-3}L_2+(d-3)x_0^{d-4}L_3+\ldots +L_{d-1},\\
    &\frac{\partial F}{\partial
      x_i}=x_0^{d-2}\frac{dL_2}{dx_i}+x_0^{d-3}\frac{dL_3}{dx_i}+\ldots+\frac{dL_d}{dx_i},
    \quad i\in \{1,\ldots,n+1\}\,.
  \end{align*}

  Now, the Jacobian criterion shows that the point $(1:0:\ldots:0)$ is
  singular.
\end{proof}

The following is the main result of this section.

\begin{theorem} \label{resultado}%
  Let $n\geq 2$ and $d\geq 3$ be integers, and $(n,d)\neq (2,4)$. A
  positive integer $q$, with $\gcd(q,d)=\gcd(q,d-1)=1$ is the order of
  an automorphism of a smooth hypersurface of dimension $n$ and degree
  $d$ if and only if there exists $\ell\in\{1,\ldots,n+2\}$ such that
  $$(1-d)^\ell\equiv 1 \mod q\,.$$
\end{theorem}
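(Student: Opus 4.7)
The plan is to prove both directions by a combinatorial analysis of the signature together with an explicit Klein--Fermat construction.

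For the ``only if'' direction, I would begin with a smooth $X=V(F)$ admitting an automorphism $\varphi$ of order $q$ and diagonalize $\varphi$ as in the setup preceding Lemma~\ref{base}, obtaining a signature $\sigma\in\ZZ_q^{n+2}$ and $\varphi(F)=\xi^a F$. Since $\gcd(q,d)=1$, rescaling the lift of $\varphi$ in $\GL(V)$ by a suitable $q$-th root of unity kills $a$; equivalently, after the shift $\tau_i:=\sigma_i-a/d$ one may assume $\varphi(F)=F$ with signature $\tau$. Lemma~\ref{base} then forces $\deg_{x_i}F\in\{d-1,d\}$, so for each $i$ either a monomial $x_i^d$ of $F$ gives $\tau_i\equiv 0\pmod q$, or a monomial $x_i^{d-1}x_j$ of $F$ gives $\tau_j\equiv(1-d)\tau_i\pmod q$ (the hypothesis $\gcd(q,d-1)=1$ making $1-d$ invertible). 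A choice of $j$ in the second case defines a self-map $f\colon\{0,\ldots,n+1\}\to\{0,\ldots,n+1\}$ with $\tau_{f(i)}=(1-d)\tau_i$.

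Next I would exploit that $\varphi$ has order $q$ in $\PGL(V)$: the differences $\tau_i-\tau_j$ collectively generate $\ZZ_q$, so in particular share no common prime factor with $q$. From this I would extract an index $i^\star$ with $\tau_{i^\star}$ a unit of $\ZZ_q$, for otherwise every $\tau_i$ would share a prime $p\mid q$ and so would every difference, contradicting the generation. The forward $f$-orbit of $i^\star$ consists entirely of units (each step multiplies by the unit $1-d$), so it cannot enter a self-loop and must land on a cycle of length $2\le\ell\le n+2$. Evaluating the cycle relation $(1-d)^\ell\tau\equiv\tau\pmod q$ at a unit $\tau$ yields $(1-d)^\ell\equiv 1\pmod q$.

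For the converse, given $\ell\in\{1,\ldots,n+2\}$ with $(1-d)^\ell\equiv 1\pmod q$, I would analyze the Klein--Fermat hybrid
\[ F \;=\; \sum_{i=0}^{\ell-1}x_i^{d-1}x_{i+1\bmod\ell} \;+\; \sum_{i=\ell}^{n+1}x_i^d \]
carrying the diagonal automorphism $\varphi=\diag\bigl(1,\,1-d,\,(1-d)^2,\,\ldots,\,(1-d)^{\ell-1},\,0,\,\ldots,\,0\bigr)$. The congruence $(1-d)^\ell\equiv 1\pmod q$ is precisely what closes the cyclic invariance and gives $\varphi(F)=F$; the order in $\PGL$ is exactly $q$ because the difference $\sigma_0-\sigma_1=d$ is a unit modulo $q$. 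Smoothness splits between the two blocks: the Fermat Jacobian vanishes only at the origin, and multiplying the $\ell$ Jacobian equations of the cyclic block around the cycle telescopes to $(1-d)^\ell=1$ in $\CC$, which is impossible because $|1-d|\ge 2$.

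The main obstacle is the extraction of a unit $\tau_{i^\star}$ in the forward direction; the idea is clean, but must be handled with care across the prime factorization of $q$. Secondary delicate points are the case $\ell=n+2$ of the construction, where no Fermat variables remain but $\sigma_0-\sigma_1=d$ still witnesses a unit, and the case $\ell=2$ of the smoothness check, where the telescoping reduces to $(d-1)^2=1$ rather than $(1-d)^\ell=1$ but is still excluded for $d\ge 3$.
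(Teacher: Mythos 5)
Your proposal follows the paper's proof almost step for step: the rescaling of the lift of $\varphi$ to achieve $\varphi(F)=F$ (using $\gcd(q,d)=1$), the use of Lemma~\ref{base} to attach to each index $i$ with nonzero weight an index $j$ with $\sigma_j\equiv(1-d)\sigma_i\pmod q$, the pigeonhole/cycle argument, and the same cyclic-plus-Fermat form $F=\sum_{i=0}^{\ell-1}x_i^{d-1}x_{(i+1)\bmod\ell}+\sum_{i\geq\ell}x_i^d$ for the converse. The converse half of your proposal is complete and correct (including the observation that $\sigma_0-\sigma_1=d$ being a unit guarantees order exactly $q$ in $\PGL(V)$, a point the paper leaves implicit).

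However, the step you yourself flag as ``the main obstacle'' is a genuine and, in fact, irreparable gap. Your extraction of an index $i^\star$ with $\tau_{i^\star}$ a unit of $\ZZ_q$ is incorrect: from ``no $\tau_i$ is a unit'' it does not follow that all $\tau_i$ share a common prime divisor of $q$ (for $q=15$ the values $3$ and $5$ are both non-units, share no prime, and have unit difference). Without a unit on the cycle, the cycle relation only yields $((1-d)^\ell-1)\tau\equiv 0\pmod q$ with $\tau$ a possible zero divisor, which is strictly weaker than $(1-d)^\ell\equiv 1\pmod q$. Worse, the ``only if'' direction is actually false for composite $q$: take $n=3$, $d=5$, $q=39$ and $F=x_0^{4}x_1+x_1^{4}x_0+x_2^{4}x_3+x_3^{4}x_4+x_4^{4}x_2$. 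Each cyclic block is a smooth Klein cone, so $X=V(F)\subset\PP^4$ is a smooth quintic threefold, and $F$ is invariant under the diagonal automorphism with signature $\sigma=(13,26,3,27,9)\in\ZZ_{39}^{5}$ (check: $4\cdot13+26$, $4\cdot26+13$, $4\cdot3+27$, $4\cdot27+9$, $4\cdot9+3$ are all $\equiv 0\pmod{39}$); since $\gcd(26-13,13-3,39)=1$ the differences generate $\ZZ_{39}$ and the induced automorphism of $X$ has order exactly $39$. Yet $\gcd(39,5)=\gcd(39,4)=1$ while the order of $1-d=-4$ modulo $39$ is $\operatorname{lcm}(2,3)=6>n+2=5$, so no admissible $\ell$ exists. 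The mechanism is exactly the one your unit-extraction was meant to exclude: each cycle of the self-map $f$ forces $(1-d)^{\ell}\equiv1$ only modulo the part of $q$ where its weights are invertible, and distinct cycles can handle distinct prime factors of $q$. You should know that the paper's own proof commits the same unjustified cancellation of $\sigma_{k_0}$ (known only to be nonzero, not invertible); for $q$ prime --- the only case used in the remainder of the paper --- nonzero does imply invertible and the argument is sound.
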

\begin{proof}
  To prove the ``only if'' part, suppose that $F\in S^d(V^*)$ is a
  form of degree $d$ such that the hypersurface $X=V(F)\subseteq
  \PP^{n+1}$ is smooth and admits an automorphism $\varphi$ of order
  $q$, with $\gcd(q,d)=\gcd(q,d-1)=1$. Without loss of generality, we
  assume that $\varphi$ is diagonal and we let
  $\sigma=(\sigma_0,\ldots,\sigma_{n+1})\in\ZZ_q^{n+2}$ be its
  signature.

  We have $\varphi(F)=\xi^{a} F$, where $\xi$ is a primitive $q$-th
  root of unity. Let $b$ be such that $d\cdot b \equiv -a \mod q$,
  such a $b$ always exits since $\gcd(q,d)=1$. Consider the
  automorphism $\psi=\xi^{b}\varphi$ of $\GL(V)$. Clearly, $\psi$ and
  $\varphi$ induce the same automorphism in $\PP^{n+1}$. Furthermore,
  for the form $F$ of degree $d$ we have
  $\psi(F)=\xi^{db}\varphi(F)=\xi^{db+a}F=F$. Hence, we may and will
  assume that $\varphi(F)=F$.

  Let now $k_0$ be such that $\sigma_{k_0}\not\equiv 0\mod q$. By
  Lemma \ref{base}, $F$ contains a monomial $x_{k_0}^{d-1}x_{k_1}$ for
  some $k_1\in\{0,\ldots,n+1\}$ (not necessarily with coefficient
  1). The form $F$ is invariant by the diagonal automorphism $\varphi$
  so the monomial $x_{k_0}^{d-1}x_{k_1}$ is also invariant by
  $\varphi$ i.e., $(d-1)\sigma_{k_0}+\sigma_{k_1}\equiv 0 \mod q $,
  and so
  \begin{align}
    \label{equiva}  
    \sigma_{k_1}\equiv(1-d)\sigma_{k_0}\, \mod q\,.
  \end{align}
  Furthermore, since $\gcd(q,d-1)=1$ we have $\sigma_{k_1}\not\equiv 0
  \mod q $, and since $\gcd(q,d)=1$ we have $k_1\neq k_0$.

  Applying the above argument with $k_0$ replaced by $k_1$, we let
  $k_2$ be such that the monomial $x_{k_1}^{d-1}x_{k_2}$ is invariant
  by $\varphi$ and is contained in $F$ (not necessarily with
  coefficient 1). Iterating this process, for all
  $i\in\{3,\ldots,n+2\}$ we let $k_i\in\{0,\ldots,n+1\}$ be such that
  $x_{k_{i-1}}^{d-1}x_{k_i}$ is a monomial in $F$ (not necessarily
  with coefficient 1) invariant by $\varphi$.

  By \eqref{equiva}, we have
  \begin{align}
    \label{equiva2}  
    \sigma_{k_i}\equiv (1-d)\sigma_{k_{i-1}}\equiv (1-d)^2\sigma_{k_{i-2}}
    \equiv(1-d)^i\sigma_{k_0} \mod q ,\ \forall i\in\{2,\ldots,n+2\}\,,
  \end{align}
  and all of the $\sigma_{k_i}$ are non-zero.

  Since $k_i\in\{0,\ldots,n+1\}$ there are at least two
  $i,j\in\{0,\ldots,n+2\}$, $i>j$ such that $k_i=k_j$. Thus
  $\sigma_{k_i}=\sigma_{k_j}$, and since $\sigma_{k_i}\equiv
  (1-d)^i\sigma_{k_0}\mod q$ and $\sigma_{k_j}\equiv
  (1-d)^j\sigma_{k_0} \mod q $, we have
  $$(1-d)^{i-j}\equiv 1 \mod q\,,$$
  and so the ``only if'' part of the theorem follows.

  To prove the converse statement, let $q$ be a positive integer such
  that $\gcd(q,d)=\gcd(q,d-1)=1$, and assume that there exists
  $\ell\in\{1,\ldots,n+2\}$ such that $(1-d)^\ell\equiv 1 \mod q$.

  We let $F\in S^d(V^*)$ be the form 
  $$F=\sum_{i=1}^{\ell-1} x_{i-1}^{d-1}x_i +x_{\ell-1}^{d-1}x_0+
  \sum_{i=\ell}^{n+1}x_i^d\,.$$ %
  By construction, the form $F$ form admits the automorphism
  $\varphi=\diag(\sigma)$, where
  $$\sigma=\big(1,1-d,(1-d)^2,\ldots,(1-d)^{\ell-1},
  \overbrace{0,\ldots,0}^{n+2-\ell\mbox{ times}}\big)\in\ZZ_q^{n+2}\,.$$ %
  An easy modification of the argument in Example~\ref{klein-def}
  below shows that $X=V(F)$ is smooth, proving the theorem.
\end{proof}

\begin{remark} \label{signa} %
  Let $\varphi=\diag(\sigma)$ be an automorphism of order $q$ of the
  smooth hypersurface $X=V(F)$, with $\gcd(q,d)=\gcd(q,d-1)$. As in
  the proof of Theorem \ref{resultado}, we may and will assume that
  $\varphi(F)=F$ and we let $\ell$ be as in Definition \ref{admis}. If
  $\sigma_0\neq 0$ is a component of the signature $\sigma$, then by
  \eqref{equiva2} we have that $(1-d)^i\sigma_0$ is also a component
  of $\sigma$, $\forall i<\ell$. Furthermore, if $q$ is a prime
  number then replacing $\varphi$ by $\varphi^{a}$, where $a\in\ZZ_q$
  is such that $a\cdot\sigma_0\equiv 1\mod q$, we may assume that
  $\sigma_0=1$.
\end{remark}

\section{Automorphisms of prime order of smooth hypersurfaces}
\label{sec-prime}

In this section we study the particular case of automorphisms of prime
order $p$. In this case we are able to give a full characterization of
the prime numbers that appear as the order of an automorphism of some
smooth hypersurface of dimension $n$ and degree $d$. We also show that
the order of such an automorphism is bounded by $(d-1)^{n+1}$.

\begin{definition} \label{admis} We say that a prime number $p$ is
  \emph{admissible in dimension $n$ and degree $d$} if either $p$
  divides $(d-1)$ or there exists $\ell\in\{1,\ldots,n+2\}$ such that
  $$(1-d)^\ell\equiv 1  \mod p\,.$$
\end{definition}

This definition is justified by the following proposition.

\begin{proposition} \label{res-prime} %
  Let $n\geq 2$ and $d\geq 3$ be integers, and $(n,d)\neq (2,4)$. A
  prime number $p$ is the order of an automorphism of a smooth
  hypersurface of dimension $n$ and degree $d$ if and only if $p$ is
  admissible in dimension $n$ and degree $d$.
\end{proposition}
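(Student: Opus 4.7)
The plan is to split into three mutually exclusive cases according to how the prime $p$ relates to $d$: (i) $\gcd(p,d)=\gcd(p,d-1)=1$, (ii) $p\mid d$, and (iii) $p\mid d-1$. Case (i) is covered entirely by Theorem~\ref{resultado}: realization of $p$ as the order of an automorphism is equivalent to $(1-d)^\ell\equiv 1\pmod p$ for some $\ell\in\{1,\ldots,n+2\}$, which is precisely admissibility in this range. Cases (ii) and (iii) fall outside the hypothesis of Theorem~\ref{resultado}, because the rescaling trick that achieves $\varphi(F)=F$ needs $\gcd(p,d)=1$ and the iterative monomial-chasing needs $\gcd(p,d-1)=1$; these two cases therefore require separate arguments.

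For the forward direction, in cases (ii) and (iii) admissibility of $p$ is automatic: case (iii) is admissible by definition, and case (ii) gives $1-d\equiv 1\pmod p$, so $\ell=1$ satisfies the criterion. For the backward direction, I would proceed by explicit construction. In case (ii) I would take the Fermat hypersurface $F=x_0^d+x_1^d+\cdots+x_{n+1}^d$ and the order-$p$ diagonal automorphism $\varphi=\diag(1,0,\ldots,0)$; the invariance $\varphi(F)=\xi^d F=F$ holds since $p\mid d$, and smoothness of Fermat follows immediately from the Jacobian criterion. In case (iii) I would take the hybrid form
\[F=x_0^{d-1}x_1+x_1^d+x_2^d+\cdots+x_{n+1}^d\]
together with the same $\varphi=\diag(1,0,\ldots,0)$ of order $p$; the only monomial involving $x_0$ is $x_0^{d-1}x_1$, on which $\varphi$ acts by $\xi^{d-1}=1$ since $p\mid d-1$, so $\varphi(F)=F$. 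A direct computation of the partial derivatives then shows that their common zero locus is only the origin, so $V(F)$ is smooth.

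The main (mild) obstacle is finding the explicit form in case (iii): it must carry an order-$p$ automorphism with a nontrivial signature component, remain smooth, and respect the constraint from Lemma~\ref{base} that every variable appears to degree at least $d-1$. The hybrid form above matches the divisibility $p\mid d-1$ exactly: its $x_0$-degree equals $d-1$, the minimum allowed by Lemma~\ref{base}, while the Fermat tail $x_1^d+\cdots+x_{n+1}^d$ secures smoothness after the Jacobian check. Notice also that this case genuinely requires a new construction rather than a modification of Theorem~\ref{resultado}, since for $p\mid d-1$ one has $(1-d)^\ell\equiv 0\pmod p$ for all $\ell\geq 1$, so the signature-propagation identity \eqref{equiva2} collapses and only the degenerate signature $(1,0,\ldots,0)$ can survive.
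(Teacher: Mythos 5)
Your proposal is correct and follows essentially the same route as the paper: the case $\gcd(p,d)=\gcd(p,d-1)=1$ is delegated to Theorem~\ref{resultado}, and the cases $p\mid d$ and $p\mid d-1$ are handled by the Fermat hypersurface and the hybrid form $x_0^{d-1}x_1+x_1^d+\cdots+x_{n+1}^d$ respectively, exactly as in the paper. The only (harmless) variations are that you certify admissibility for $p\mid d$ with $\ell=1$ rather than the paper's $\ell=2$, and you write down an order-$p$ diagonal automorphism directly instead of exhibiting one of order $d$ (resp.\ $d-1$) and passing to a power.
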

\begin{proof}
  In the case where $p$ does not divide $d$ or $d-1$, the proposition
  follows directly form Theorem~\ref{resultado}. Let $p$ be a prime
  number that divides $d$. The prime number $p$ is admissible with
  $\ell=2$. Indeed, $(1-d)^2= 1-2d+d^2\equiv 1\mod p$.  On the
  other hand, for every $n\geq 2$ and $d\geq 3$, let $X$ be the Fermat
  hypersurface i.e., $X=V(F)$ with
  $$F=x_0^d+x_1^d+\ldots+x_n^d+x_{n+1}^d\,.$$
  The hypersurface $X$ is smooth and admits the automorphism or order
  $d$ given by
  $$\varphi:V\rightarrow V,\qquad(\alpha_0,\alpha_1,\ldots,
  \alpha_{n+1})\mapsto (\xi
  \alpha_0,\alpha_1,\ldots,\alpha_{n+1})\,,$$ where $\xi$ is a
  primitive $d$-th root of unity. Hence, $X$ also admits an
  automorphism of order $p$.

  Let now $p$ be a prime number that divides $d-1$. The prime number
  $p$ is admissible by definition.  On the other hand, for $n\geq 2$
  and $d\geq 3$, let $X=V(F)$ be the hypersurface given by
  $$F=x_0^{d-1}x_1+x_1^d+\ldots+x_n^d+x_{n+1}^d\,.$$
  A routine computation shows that the hypersurface $X$ is smooth and
  admits the automorphism or order $d-1$ given by
  $$\varphi:V\rightarrow V,\qquad(\alpha_0,\alpha_1,\ldots,
  \alpha_{n+1})\mapsto (\xi
  \alpha_0,\alpha_1,\ldots,\alpha_{n+1})\,,$$ where $\xi$ is a
  primitive $(d-1)$-th root of unity. Hence, $X$ also admits an
  automorphism of order $p$.
\end{proof}

\begin{remark}
  Let $n\geq 3$ and $d\geq 3$. It is a trivial consequence of
  Zsigmondy's Theorem \cite{zsigmondy} that there exists at least one
  prime number admissible in dimension $n$ and degree $d$ that is not
  admissible in dimension $n'$ and degree $d$ with $n'<n$. See
  Theorem~1.1 in \cite{steidl} for a modern statement of Zsigmondy's
  Theorem.
\end{remark}

Proposition~\ref{res-prime} allows us to give, in the following
corollary, a first bound for the prime numbers that appear as the
order of an automorphism of some smooth hypersurface. This bound will
be improved in Corollary~\ref{bound2}.

\begin{corollary} \label{bound} %
  Let $n\geq 2$ and $d\geq 3$ be integers, and $(n,d)\neq (2,4)$. If a
  prime number $p$ is the order of an automorphism of a smooth
  hypersurface of dimension $n$ and degree $d$, then $p<(d-1)^{n+1}$.
\end{corollary}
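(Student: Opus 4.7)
The plan is to combine Proposition~\ref{res-prime} with a careful estimate of a value of a cyclotomic polynomial. First I would handle the easy cases: if $p\mid d-1$ or $p\mid d$, then $p\leq d$, and a short direct check shows $d<(d-1)^{n+1}$ for $n\geq 2$ and $d\geq 3$.

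In the substantial case $\gcd(p,d(d-1))=1$, Proposition~\ref{res-prime} supplies some $\ell\in\{1,\ldots,n+2\}$ with $(1-d)^\ell\equiv 1\mod p$. Let $m$ denote the exact multiplicative order of $1-d$ in $(\ZZ/p\ZZ)^*$; then $m\mid\ell$, and $2\leq m\leq n+2$ (the lower bound uses $p\nmid d$). Since $1-d$ is a primitive $m$-th root of unity modulo $p$, we have $p\mid\Phi_m(1-d)$, and this value is a nonzero integer because the only integer primitive roots of unity are $\pm 1$ and $1-d\notin\{1,-1\}$ for $d\geq 3$. Hence $p\leq|\Phi_m(1-d)|$.

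The decisive input is the factorization $x^m-1=(x-1)\prod_{k\mid m,\,k>1}\Phi_k(x)$, valid for $m\geq 2$, which yields the divisibility $\Phi_m(x)\mid (x^m-1)/(x-1)$ in $\ZZ[x]$. Evaluating at $x=1-d$ and noting $|(1-d)-1|=d$ yields
\[
p\leq|\Phi_m(1-d)|\leq \frac{|(1-d)^m-1|}{d}\leq \frac{(d-1)^m+1}{d}.
\]
A short algebraic manipulation reduces $\frac{(d-1)^m+1}{d}<(d-1)^{m-1}$ to $(d-1)^{m-1}>1$, which holds for $d\geq 3$ and $m\geq 2$. Combined with $m-1\leq n+1$, this produces the desired inequality $p<(d-1)^{n+1}$.

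The hard part is precisely this last estimate: the naive bound $p\leq|(1-d)^\ell-1|\leq (d-1)^{n+2}+1$ coming directly from Proposition~\ref{res-prime} is too weak by almost an entire power of $d-1$. The gain is obtained by passing from $\ell$ to the exact order $m$ and then extracting the factor $x-1$ from $x^m-1$, which contributes the crucial denominator $d$ that brings $(d-1)^m$ strictly below $(d-1)^{m-1}$.
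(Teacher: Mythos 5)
Your proof is correct. It takes a genuinely different route from the paper's: the paper argues by contradiction, assuming $p>(d-1)^{n+1}$ to force $\ell\in\{n+1,n+2\}$ and hence $(1-d)^\ell-1=k\cdot p$ with $1\leq k\leq d-1$; since $(1-d)\equiv 1\bmod d$ gives $d\mid k\cdot p$, and $k<d$, this contradicts $\gcd(p,d)=1$. You instead argue directly, replacing $\ell$ by the exact multiplicative order $m$ of $1-d$ modulo $p$, invoking $p\mid\Phi_m(1-d)$, and extracting the factor $\Phi_1(x)=x-1$ from $x^m-1$ to get $p\leq\bigl((d-1)^m+1\bigr)/d<(d-1)^{m-1}\leq(d-1)^{n+1}$. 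The essential arithmetic gain is the same in both arguments — a factor of $d$ coming from $(1-d)\equiv 1\bmod d$, equivalently from the linear factor $(1-d)-1=-d$ — but your packaging via cyclotomic polynomials is more in the spirit of the paper's later Lemma~\ref{pmax} and actually yields the sharper intermediate bound $p<(d-1)^{m-1}$ in terms of the true order $m$, anticipating the refinement in Corollary~\ref{bound2}. Minor remarks: your claim $p\mid\Phi_m(1-d)$ deserves the one-line justification that $p$ cannot divide $\Phi_k(1-d)$ for a proper divisor $k\mid m$ with $k\geq 1$ (else the order would be smaller), and your bound $|\Phi_m(1-d)|\leq|(1-d)^m-1|/d$ uses that the cofactor $\prod_{k\mid m,\,1<k<m}\Phi_k(1-d)$ is a nonzero integer, hence of absolute value at least $1$; both points are routine and consistent with the facts the paper itself records about cyclotomic polynomials.
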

\begin{proof}
  Suppose that $p>(d-1)^{n+1}$. By Proposition~\ref{res-prime}, $p$ is
  admissible in dimension $n$ and degree $d$, and so
  $$(1-d)^{n+1}\equiv 1 \mod p, \quad \mbox{or}\quad (1-d)^{n+2}\equiv 1 \mod p\,.$$
  This yields
  $$p=(1-d)^{n+1}-1, \quad \mbox{or}\quad k\cdot p=(1-d)^{n+2}-1, \quad
  k\in\{1,\ldots, d-1\}\,.$$ %
  Since $1-d\equiv 1\mod d$, we have that $d$ is a divisor of $p$ or
  $k\cdot p$. In both cases, this yields $\gcd(p,d)\neq 1$, which
  provides a contradiction since $p> d$.
\end{proof}

\begin{remark}
  In \cite{szabo} it is shown that the order of a linear automorphism
  of an $n$-dimensional projective variety of degree $d$ is bounded by
  $d^{n+1}$. Hence, in the particular case of prime orders, our bound
  above is already sharper.
\end{remark}

\begin{remark}
  Let $n\geq 2$ and $d\geq 3$ be integers, and $(n,d)\neq (2,4)$. Let
  $X=V(F)$ be a smooth hypersurface of degree $d$ of $\PP^{n+1}$. In
  \cite[Theorem~2]{gorinov} it is shown that the order of $\aut(X)$
  divides
  \begin{align*}
    B=\frac{1}{n+1}\prod_{i=0}^{n}\frac{1}{\binom{n+2}{i}}
    \left((-1)^{n-i+1}+(d-1)^{n-i+2}\right)\cdot\operatorname{lcm}
    \left(\tbinom{n+2}{i}(d-1)^i,(n+2)(d-1)^n)\right)\,.
  \end{align*}

  And in \cite[Page 24, line 4]{gorinov} it is conjectured that every
  prime number $p$ that divides $B$ is the order of an automorphism of
  a smooth hypersurface of dimension $n$ and degree $d$. We can prove
  this conjecture.

  Indeed, if a prime $p$ divides $B$, then $p$ divides
  $\left((-1)^{n-i+1}+(d-1)^{n-i+2}\right)$, $p$ divides $(d-1)$, or
  $p\leq n+2$. In the first two cases $p$ is clearly admissible in
  dimension $n$ and degree $d$. 

  Assume now $p\leq n+2$ and $p$ does not divide $(d-1)$. Since $p$
  does not divide $(d-1)$, we have $(1-d)^\ell\not\equiv 0\mod p$, for
  all $\ell\in \ZZ_{\geq 0}$. Since $p\leq n+2$ there exists
  $\ell,\ell'\in\{1,\ldots n+2\}$, $\ell>\ell'$ such that
  $(1-d)^\ell\equiv(1-d)^{\ell'}\mod p$. Hence
  $(1-d)^{\ell-\ell'}\equiv 1\mod p$ and so $p$ is admissible in
  dimension $n$ and degree $d$. Now the conjecture follows from
  Proposition~\ref{res-prime}.
\end{remark}

The criterion in Theorem \ref{resultado} is easily computable. As en
example, in Table~\ref{table1} we give all the admissible prime
numbers in degree $4$ for different values of the dimension $n$.
\begin{table}[!ht]
  \begin{tabular}{ | c | c | }
    \hline	
    $n$ & admissible primes \\
    \hline		
    3 & $2, 3, 5, 7, 61$  \\
    4 & $2, 3, 5, 7, 13, 61$ \\
    5 & $2, 3, 5, 7, 13, 61, 547$ \\
    6 & $2, 3, 5, 7, 13, 41, 61, 547$ \\
    7 & $2, 3, 5, 7, 13, 19, 37, 41, 61, 547$ \\
    8 & $2, 3, 5, 7, 11, 13, 19, 37, 41, 61, 547$ \\
    9 & $2, 3  5, 7, 11, 13, 19, 37, 41, 61, 67, 547, 661$ \\
    10 & $2, 3, 5, 7, 11, 13, 19, 37, 41, 61, 67, 73, 547, 661$ \\
    \hline
  \end{tabular} \vspace{1ex}
  \caption{Admissible prime numbers in degree $4$ for $3\leq n\leq
    10$.}
  \label{table1}
\end{table}

In Table~\ref{table2} we give the maximal admissible prime number
$p$ for small $n$ and $d$.
\begin{table}[!ht]
  \begin{tabular}{ | c || c | c | c | c | c | c | c | c | c | c | }
    \hline	
    $n\backslash d$ & 3 & 4 & 5 & 6 & 7 & 8 & 9 \\
    \hline\hline
    2 & 5 & - & 17 & 13 & 37 & 43 & 19 \\
    \hline
    3 & 11 & 61 & 41 & 521 & 101 & 191 & 331 \\
    \hline
    4 & 11 & 61 & 41 & 521 & 101 & 191 & 331 \\
    \hline
    5 & 43 & 547 & 113 & 521 & 197 & 911 & 5419 \\
    \hline
    6 & 43 & 547 & 257 & 521 & 1297 & 1201 & 5419 \\
    \hline
    7 & 43 & 547 & 257 & 5167 & 46441 & 117307 & 87211 \\
    \hline
    8 & 43 & 547 & 257 & 5167 & 46441 & 117307 & 87211 \\
    \hline
    9 & 683 & 661 & 2113 & 5281 & 51828151 & 10746341 & 87211 \\
    \hline
  \end{tabular}\vspace{1ex}
  \caption{Maximal admissible prime for $2\leq n\leq 9$, $3\leq
    d\leq 9$, and $(n,d)\neq(2,4)$.}
  \label{table2}
\end{table}

\section[Hypersurfaces with an automorphism of prime order $p\geq
(d-1)^n$ ]{Smooth hypersurfaces admitting an automorphism \\ of prime
  order $p>(d-1)^n$}
\label{sec-klein}

In this section we show the following result: A smooth hypersurface
$X$ of dimension $n$ and degree $d$ admits an automorphism of prime
order $p>(d-1)^n$ if and only if $X$ is the Klein hypersurface, $n=2$
or $n+2$ is prime, and $p=\Phi_{n+2}(1-d)$, where $\Phi_{n+2}$ is the
$(n+2)$-th cyclotomic polynomial. First, we recall some results about
cyclotomic polynomials, see \cite[Ch. VI, \S 3]{lang} for proofs.

\begin{definition}
  For every $m\in\ZZ_{>0}$, the $m$-th cyclotomic polynomial is
  defined as
  $$\Phi_m(t)=\prod_{\xi}(t-\xi)\,,$$
  where the product is over all primitive $m$-th roots of unity $\xi$.
\end{definition}

It is well known that $\Phi_m(t)$ is irreducible over $\QQ$ and has
integer coefficients. Furthermore, a routine computation shows that
$\Phi_1(t)=t-1$ and for every $q$ prime and $r\geq 1$
$$\Phi_q(t)=t^{q-1}+t^{q-2}+\ldots +1, \quad\mbox{and}\quad
\Phi_{q^r}(t)=\Phi_q(t^{q^{r-1}})\,.$$

The main result about cyclotomic polynomials that we will need in the
sequel is the following factorization
$$t^n-1=\prod_{r|n} \Phi_r(t)\,,$$
where $r|n$ means $r$ is a divisor of $n$.

Our next theorem gives a criterion for the existence of a smooth
hypersurface of dimension $n$ and degree $d$ admitting an automorphism
of prime order $p>(d-1)^n$. For the proof we need the following simple
inequalities.
\begin{align*}
  &\Phi_1(1-d)=-d,\quad \Phi_2(1-d)=2-d,\quad
  \Phi_4(1-d)=d^2-2d+2>(d-1)^2,
  \quad\mbox{and} \\
  &(d-1)^{q-2}<\Phi_q(1-d)=(1-d)^{q-1}+\ldots
  +1<(d-1)^{q-1},\quad\mbox{for all}\quad q\geq 3\mbox{ prime}\,.
\end{align*}

\begin{lemma} \label{pmax} %
  Let $n\geq 2$, $d\geq 3$ be integers, and $(n,d)\neq (2,4)$. There
  exists a smooth hypersurface of dimension $n$ and degree $d$
  admitting an automorphism of prime order $p>(d-1)^n$ if and only if
  $n=2$ or $n+2$ is prime, and $p=\Phi_{n+2}(1-d)$.
\end{lemma}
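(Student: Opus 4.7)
I would prove the two directions of the iff separately.

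\textbf{Sufficient direction ($\Leftarrow$).} Assume $p = \Phi_{n+2}(1-d)$ is prime and $n = 2$ or $n+2$ is prime. I exhibit the Klein hypersurface $X = V(F)$ with $F = \sum_{i=0}^{n+1} x_i^{d-1} x_{(i+1) \bmod (n+2)}$. Smoothness follows by a cyclic-product argument: if $\partial F/\partial x_j = 0$ at a point with all coordinates non-zero, multiplying the equations $(d-1)x_j^{d-2} x_{j+1} = -x_{j-1}^{d-1}$ around the cycle yields $(-(d-1))^{n+2} = 1$ in $\CC$, which is absurd for $d \geq 3$; any vanishing coordinate cascades to all coordinates being zero. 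The diagonal automorphism $\varphi$ with signature $\sigma = (1, 1-d, (1-d)^2, \ldots, (1-d)^{n+1}) \in \ZZ_p^{n+2}$ preserves every monomial of $F$ via $(d-1)(1-d)^i + (1-d)^{i+1} \equiv 0 \pmod p$, together with the cyclic relation coming from $p \mid (1-d)^{n+2} - 1$. Moreover, $\varphi$ has order $p$ in $\PGL$ since $\sigma_1 - \sigma_0 = -d \not\equiv 0 \pmod p$ (using $p > d$). The bound $p > (d-1)^n$ is immediate from the preamble estimates.

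\textbf{Necessary direction ($\Rightarrow$): first reduction.} Let $X$ be smooth with $\varphi$ of prime order $p > (d-1)^n$; diagonalize $\varphi$ with $\varphi(F) = F$ as in the proof of Theorem~\ref{resultado}, and set $\ell = \operatorname{ord}_p(1-d)$. The chain argument there gives $\ell \leq n+2$, and $p \mid \Phi_\ell(1-d)$ since $1-d$ is a primitive $\ell$-th root of unity modulo $p$. To rule out $\ell \leq n+1$, use the factorization $(1-d)^\ell - 1 = \prod_{r \mid \ell} \Phi_r(1-d)$: for $\ell \geq 2$ the factor $\Phi_1(1-d) = -d$ is present, giving $|\Phi_\ell(1-d)| \leq ((d-1)^\ell + 1)/d$; for $\ell \leq n+1$ this is strictly less than $(d-1)^n$ (equivalent to the trivial inequality $1 < (d-1)^n$), and for $\ell = 1$ one gets $p \leq d < (d-1)^n$. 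In either case $p \leq |\Phi_\ell(1-d)| < (d-1)^n$ contradicts the hypothesis, so $\ell = n+2$ and $p \mid \Phi_{n+2}(1-d)$.

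\textbf{Identifying $n$ and $p$.} The inequality $\Phi_{n+2}(1-d) \geq p > (d-1)^n$, combined with refined upper bounds on $|\Phi_{n+2}(1-d)|$ for composite $n+2$ (obtained again through the cyclotomic factorization, by isolating enough proper cyclotomic factors in the denominator), forces $n+2$ to be prime or $n+2 = 4$; this matches the classification of integers $m$ with $\phi(m) \geq m-2$. Writing $\Phi_{n+2}(1-d) = p m$, the preamble upper bound $\Phi_{n+2}(1-d) < (d-1)^{n+1}$ together with $p > (d-1)^n$ gives $m < d - 1$. \textbf{The main obstacle} is showing $m = 1$: the pure size bound $m \leq d - 2$ is insufficient on its own, and I anticipate needing the arithmetic fact that any prime divisor of $\Phi_q(a)$ coprime to $q$ satisfies $p \equiv 1 \pmod q$, combined with the explicit size constraints for $n+2$ prime (respectively the $n+2 = 4$ case handled directly), to rule out $m \geq 2$ and conclude $p = \Phi_{n+2}(1-d)$.
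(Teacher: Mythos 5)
Your overall strategy is the paper's: reduce to $\operatorname{ord}_p(1-d)=n+2$, hence $p\mid\Phi_{n+2}(1-d)$; eliminate composite $n+2\neq 4$ by isolating a proper cyclotomic cofactor that is too large; and then identify $p$ with $\Phi_{n+2}(1-d)$. Your sufficiency argument (Klein hypersurface, signature $(1,1-d,\dots,(1-d)^{n+1})$, the cyclic-product smoothness check) is correct and matches Example~\ref{klein-def} and the ``if'' part of the paper's proof. Your reduction to $\ell=n+2$ is also fine (the paper gets it slightly differently, by applying Corollary~\ref{bound} in dimension $n-1$, but the content is the same), and your elimination of composite $n+2$, though sketched, is exactly the paper's move of writing $(1-d)^{n+2}-1=kp$ with $|k|\leq(d-1)^2$ and showing that $\Phi_1(1-d)\Phi_q(1-d)$ (resp.\ $\Phi_4(1-d)$) would have to divide $k$ while exceeding $(d-1)^2$.

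The gap you flag at the last step is genuine, and you are right that the size bound $m\leq d-2$ (where $\Phi_{n+2}(1-d)=pm$) does not finish the job; what you could not have known is that the paper closes this step by fiat --- it states ``the only possibility is $k=\Phi_1(1-d)=-d$'' with no argument --- and that no argument can close it in full generality, because the statement of Lemma~\ref{pmax} fails for some $(n,d)$. Concretely, take $(n,d)=(3,10)$: then $\Phi_5(-9)=5905=5\cdot 1181$ with $1181$ prime, the order of $-9$ modulo $1181$ is $5=n+2$, so by Theorem~\ref{resultado} (or directly on the Klein hypersurface of dimension $3$ and degree $10$, via the signature $(1,-9,81,-729,6561)\bmod 1181$) there is a smooth such hypersurface with an automorphism of prime order $1181>9^3=(d-1)^n$, yet $1181\neq\Phi_5(-9)$, which is not even prime. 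Your proposed arithmetic input --- every prime divisor of $\Phi_q(a)$ is $\equiv 1\pmod q$ or equals $q$ --- is the right tool but only yields the lemma under an extra hypothesis: it forces $m=1$ unless either some prime $\equiv 1\pmod{n+2}$ is $\leq d-2$, or $n+2\mid d$ and $n+2\leq d-2$ (the case realized above with $m=5$). In particular the identification $p=\Phi_{n+2}(1-d)$ does hold for $n=2$, for $d\in\{3,4\}$, and for all pairs $(n,d)$ used in the rest of the paper, but the correct general conclusion of the ``only if'' direction is only that $n=2$ or $n+2$ is prime and $p\mid\Phi_{n+2}(1-d)$ with $\Phi_{n+2}(1-d)/p\leq d-2$.
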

\begin{proof}
  We prove first the ``only if'' part of the lemma. Assume that there
  exists a smooth hypersurface of degree $n$ and dimension $d$
  admitting an automorphism of prime order $p>(d-1)^n$. By
  Proposition~\ref{res-prime}, $p$ is admissible in dimension $n$ and
  degree $d$ and by Corollary~\ref{bound}, $p$ is not admissible in
  dimension $n-1$ and degree $d$. Hence $(1-d)^{n+2}\equiv 1\mod p$
  and so
  $$(1-d)^{n+2}-1= k\cdot p,\quad\mbox{for some}\quad
  k\in\{-(d-1)^2,\ldots,(d-1)^2\}\,.$$

  If $n=2$ then
  $$(1-d)^{n+2}-1=\Phi_1(1-d)\cdot\Phi_2(1-d)\cdot\Phi_4(1-d)\,,$$
  and the only possibility is $k=\Phi_1(1-d)\Phi_2(1-d)=d(d-2)$
  and $p=\Phi_4(1-d)=d^2-2d+2$.

  If $n+2$ is prime, then
  $$(1-d)^{n+2}-1=\Phi_1(1-d)\cdot\Phi_{n+2}(1-d)\,,$$ %
  and the only possibility is $k=\Phi_1(1-d)=-d$ and
  $p=\Phi_{n+2}(1-d)=(1-d)^{n+1}+\ldots+1$.

  To finish this direction of the proof, we have to show that these
  two are the only possible cases. If $n\neq 2$ and $n+2$ is not
  prime, then
  $$n+2=q\cdot n',\quad\mbox{or}\quad n+2=2^i\,,$$
  where $q\geq 3$ is a prime number,$n'\geq 2$, and $i\geq 3$.
  
  Assume fist that $n+2=q\cdot n'$.  In this case
  $$(1-d)^{n+2}-1=\Phi_1(1-d)\cdot\Phi_{q}(1-d)\cdot P(1-d)\,,$$ %
  for some polynomial $P(t)$. Let
  $k'=\Phi_1(1-d)\cdot\Phi_{q}(1-d)$. Since
  $k'<d(d-1)^{q-1}<p$, $k$ is a multiple of $k'$. But
  $k'>d(d-1)^{q-1}>(d-1)^2$ which provides a contradiction.

  Finally, assume that $n+2=2^i$, $i\geq 3$.  In this case
  $$(1-d)^{n+2}-1=\Phi_{4}(1-d)\cdot P(1-d)\,,$$ %
  for some polynomial $P(t)$. Let $k'=\Phi_{4}(1-d)=(d-1)^2+1$. Since
  $k'<(d-1)^3<p$, $k$ is a multiple of $k'$. but $k'>(d-1)^2$
  which provides a contradiction.
  \medskip

  To prove the ``if'' part, let $n=2$ or $n+2$ be prime, and assume
  that $\Phi_{n+2}(1-d)$ is prime. In both cases
  $\Phi_{n+2}(1-d)\geq (1-d)^n$.  If $n+2$ is prime, then
  $$(1-d)^{n+2}-1= \Phi_{1}(1-d) \cdot \Phi_{n+2}(1-d)\equiv 0 \mod
  \Phi_{n+2}(1-d)\,,$$ and so $\Phi_{n+2}(1-d)$ is admissible in
  dimension $n$ and degree $d$.
  If $n=2$, then
  $$(1-d)^4-1=\Phi_{1}(1-d) \cdot\Phi_{2}(1-d) \cdot \Phi_{4}(1-d) \equiv 0 \mod
  \Phi_{4}(1-d),.$$ %
  Hence, $\Phi_{4}(1-d)$ is admissible in dimension $2$ and degree
  $d$. This completes the proof.
\end{proof}

In the following corollary, that follows directly form 
Lemma~\ref{pmax}, we give a sharp bound for the order of an automorphism of a
smooth hypersurface of dimension $n$ and degree $d$.

\begin{corollary} \label{bound2}%
  Let $n\geq 2$ and $d\geq 3$ be integers, and $(n,d)\neq
  (2,4)$. Assume that a smooth hypersurface of dimension $n$ and
  degree $d$ admits an automorphism of prime order $p$.
  \begin{enumerate}[(i)]
  \item If $n=2$ or $n+2$ is prime, and $\Phi_{n+2}(1-d)$ is prime,
    then $p\leq \Phi_{n+2}(1-d)$. This bound is sharp.
  \item In any other case, $p<(d-1)^n$.
  \end{enumerate}
\end{corollary}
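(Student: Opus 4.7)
The plan is to read the corollary off directly from Lemma~\ref{pmax}, using the inequalities on the cyclotomic values $\Phi_m(1-d)$ listed just above that lemma. The only point requiring a small extra argument is upgrading the non-strict bound $p\leq(d-1)^n$ coming out of Lemma~\ref{pmax} to the strict bound $p<(d-1)^n$ claimed in part (ii).

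For part (i), I would split on whether $p>(d-1)^n$. If so, Lemma~\ref{pmax} forces $p=\Phi_{n+2}(1-d)$ outright. If instead $p\leq(d-1)^n$, then the inequalities recalled just before Lemma~\ref{pmax} give $\Phi_{n+2}(1-d)>(d-1)^n\geq p$; concretely, for $n=2$ one invokes $\Phi_4(1-d)>(d-1)^2$, and for $n+2$ an odd prime $q$ one invokes $\Phi_q(1-d)>(d-1)^{q-2}=(d-1)^n$. In either subcase $p\leq\Phi_{n+2}(1-d)$. For the sharpness assertion, I would invoke the ``if'' direction of Lemma~\ref{pmax}, which under the running hypothesis of (i) exhibits a smooth hypersurface admitting an automorphism of prime order exactly $\Phi_{n+2}(1-d)$.

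For part (ii), the standing assumption is that we are \emph{not} in the situation of (i); that is, either $n\neq 2$ and $n+2$ is not prime, or $\Phi_{n+2}(1-d)$ is not prime. I would argue by contradiction: if $p>(d-1)^n$, Lemma~\ref{pmax} forces both that $n=2$ or $n+2$ is prime, \emph{and} that $p=\Phi_{n+2}(1-d)$; since $p$ is prime this in turn forces $\Phi_{n+2}(1-d)$ to be prime, contradicting the negation of the hypothesis of (i) in either subcase. Hence $p\leq(d-1)^n$. To upgrade this to the strict inequality, I would remark that for $n\geq 2$ and $d\geq 3$ the integer $(d-1)^n$ is a proper power of an integer $\geq 2$, hence composite, so a prime $p$ cannot equal $(d-1)^n$ and must satisfy $p<(d-1)^n$.

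The main (and only) obstacle is this composite-ness observation about $(d-1)^n$; everything else is direct bookkeeping on top of Lemma~\ref{pmax} and Proposition~\ref{res-prime}.
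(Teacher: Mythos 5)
Your proof is correct and follows the paper's intended route: the paper simply asserts that the corollary follows directly from Lemma~\ref{pmax}, and your argument is the natural fleshing-out of that claim via the inequalities $\Phi_{n+2}(1-d)>(d-1)^n$ recorded just before that lemma. The one detail the paper elides---upgrading $p\leq(d-1)^n$ to $p<(d-1)^n$ in part (ii)---is handled correctly by your observation that $(d-1)^n$ is composite for $d\geq 3$ and $n\geq 2$, so it cannot equal the prime $p$.
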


\begin{remark}\label{infinite}
  \begin{enumerate}[$(i)$]
  \item The condition in Lemma~\ref{pmax} that $\Phi_{n+2}(1-d)$ is
    prime is fulfilled, for instance, in the cases where $(n,d)$ is
    $(2,3)$, $(2,5)$, $(2,7)$, $(3,3)$, $(3,4)$, $(3,6)$,
    $(5,3)$,$(5,4)$, $(9,3)$, and $(9,7)$. See Table~\ref{table2}.

  \item Assume that $(n,d)$ is such that $\Phi_{n+2}(1-d)$ is prime
    and $n\neq 2$. Then
    $$p=\frac{(1-d)^{n+2}-1}{(1-d)-1}\,.$$
    Prime numbers of this form are usually known as generalized
    Mersenne primes or repunit primes. For $d=-1$ they correspond to
    the classical Mersenne primes and for $d=3$ they are usually
    called Wagstaff primes. It is conjectured that there are
    infinitely many such primes \cite{williams,melham}.
  \end{enumerate}
\end{remark}

In the following example we define the classical Klein hypersurfaces
that will be the subject of the remaining of this section.

\begin{example} \label{klein-def}%
  For any $n\geq 1$ and $d\geq 2$, we define the Klein hypersurface of
  dimension $n$ and degree $d$ as $X=V(F)\in\PP^{n+1}$, where
  \begin{align} \label{klein-eq}
    F=x_0^{d-1}x_1+x_1^{d-1}x_2+\ldots+x_n^{d-1}x_{n+1}+
    x_{n+1}^{d-1}x_0\,.
  \end{align}
  It is well known that $X$ is smooth except in the case where $d=2$
  and $n\equiv 2\mod 4$. In lack of a good reference we provide a
  short argument.
\end{example}

\begin{proof}
  Assume that $\alpha=(\alpha_0:\ldots:\alpha_{n+1})\in X$ is a singular
  point i.e.,
  $$F(\alpha)=0,\quad\mbox{and}\quad \frac{\partial F}{\partial x_i}(\alpha)=0\,.$$
  It is clear from the equations $\frac{\partial F}{\partial
    x_i}(\alpha)=0$ that $\alpha_i\neq 0$, for all $i\in
  \{0,\ldots,n+1\}$. Furthermore, the equations $x_i\frac{\partial
    F}{\partial x_i}(\alpha)=0$ imply
  \begin{align*}    
    \alpha_i^{d-1}\alpha_{i+1}&=(1-d)\alpha_{i+1}^{d-1}\alpha_{i+2}=
    (1-d)^2\alpha_{i+2}^{d-1}\alpha_{i+3}=
    \ldots=(1-d)^{n-i}\alpha_{n}^{d-1}\alpha_{n+1}\\
    &=(1-d)^{n-i+1}\alpha_{n+1}^{d-1}\alpha_{0}\,.
  \end{align*}
  Hence,
  $$F(\alpha)=R\cdot \alpha_{n+1}\alpha_0,\quad\mbox{where}\quad
  R=\sum_{i=0}^{n+1}(1-d)^i\,.$$ %
  If $d\neq 2$, then $R\neq 0$ and so $F(\alpha)=0$ implies
  $\alpha_0=0$ or $\alpha_{n+1}=0$ which provides a contradiction. In
  the case where $d=2$ a routine computation shows that the quadratic
  form $F$ is singular if and only if $n\equiv 2\mod 4$.
\end{proof}

This kind of hypersurfaces were first introduced by Klein who studied
the automorphism group of the Klein hypersurface of dimensions $1,3$
and degree $3$ \cite{klein}. For the proof of the theorem below, we
need the following simple lemma that follows from the uniqueness of
the decomposition of an integer in base $(1-d)$.

\begin{lemma} \label{li} %
  Let $d\geq 3$ and $a_i\in\{1,\ldots,d-2\}$, $0\leq i\leq n+1$. If
  $\sum_i a_i(1-d)^i=0$, then $a_i=0$ for all $i$.
\end{lemma}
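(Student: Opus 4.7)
The plan is to recognise that this is exactly the uniqueness statement for representations of integers in base $1-d$, and to prove it by induction on the largest index, peeling off digits one at a time via reduction modulo $d-1$. (I shall assume the intended digit range is $\{0,1,\ldots,d-2\}$, as otherwise the conclusion $a_i=0$ would contradict the hypothesis $a_i\geq 1$; the set $\{0,1,\ldots,d-2\}$ is precisely a complete system of residues modulo $d-1$, which is the crucial feature.)

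First I would reduce the relation $\sum_{i=0}^{n+1} a_i(1-d)^i=0$ modulo $d-1$. Since $1-d=-(d-1)\equiv 0 \mod (d-1)$, every term with $i\geq 1$ vanishes modulo $d-1$, leaving $a_0\equiv 0 \mod (d-1)$. Because $a_0$ lies in $\{0,1,\ldots,d-2\}$, which is a complete set of residues modulo $d-1$, this forces $a_0=0$.

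Next, with $a_0=0$, I would rewrite the identity as
$$(1-d)\sum_{j=0}^{n} a_{j+1}(1-d)^{j}=0,$$
and since $1-d\neq 0$ we obtain $\sum_{j=0}^{n} a_{j+1}(1-d)^{j}=0$. I then apply induction on the length of the sum (the base case, a single term $a_0=0$, being the modular step above) to conclude that $a_{j+1}=0$ for all $j$, and hence $a_i=0$ for all $i$.

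The only conceivable obstacle is verifying that the digit range really is a complete residue system modulo $d-1$, but $\{0,1,\ldots,d-2\}$ has exactly $d-1$ elements and consecutive representatives, so this is immediate; the rest is a standard positional-numeral argument.
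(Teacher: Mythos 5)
Your proof is correct and is essentially the argument the paper has in mind: the paper states the lemma without proof, merely noting that it follows from the uniqueness of the decomposition of an integer in base $(1-d)$, and your reduction modulo $d-1$ followed by induction on the number of terms is precisely the standard proof of that uniqueness. You also rightly observe that the digit range must be read as $\{0,1,\ldots,d-2\}$ for the statement to be non-vacuous.
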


The following is the main result of this section.

\begin{theorem} \label{kuni} %
  Let $n\geq 2$ and $d\geq 3$ be integers, and $(n,d)\neq (2,4)$. A
  smooth hypersurface $X=V(F)$ of dimension $n$ and degree $d$ admits
  an automorphism $\varphi$ of prime order $p>(d-1)^n$ if and only if
  $X$ is isomorphic to the Klein hypersurface, $n=2$ or $n+2$ is
  prime, and $p=\Phi_{n+2}(1-d)$.
\end{theorem}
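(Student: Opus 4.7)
The plan is to prove both implications by a careful analysis of the $\varphi$-invariant monomials of degree~$d$. For the ``if'' direction, the Klein hypersurface is smooth by Example~\ref{klein-def}, and a direct check shows that the diagonal automorphism with signature $\sigma=(1,1-d,(1-d)^2,\ldots,(1-d)^{n+1})$ preserves every monomial of the Klein form and has order exactly $p=\Phi_{n+2}(1-d)$ (since $\sigma_0=1$ generates $\ZZ/p$).

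For the ``only if'' direction, I first apply Remark~\ref{signa} to assume $\varphi(F)=F$ with $\sigma_0=1$, and then use the chain argument from the proof of Theorem~\ref{resultado} to produce indices $k_0=0,k_1,\ldots$ with $\sigma_{k_i}\equiv(1-d)^i \mod p$ and $x_{k_{i-1}}^{d-1}x_{k_i}$ a monomial of $F$. A key auxiliary claim is that the multiplicative order of $1-d$ modulo $p$ equals exactly $n+2$: it divides $n+2$ because $p\mid\Phi_{n+2}(1-d)$, and a proper divisor would force $p\mid d$ or, for $n=2$, $p\mid d(d-2)$, both contradicting $p>(d-1)^n$. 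Hence $\sigma_{k_0},\ldots,\sigma_{k_{n+1}}$ are distinct and $(k_0,\ldots,k_{n+1})$ is a permutation of $(0,\ldots,n+1)$. After relabeling variables so that $k_i=i$ and performing a diagonal rescaling solving the cyclic system $\lambda_i c_i^{d-1}c_{i+1}=1$ (indices mod $n+2$, whose associated matrix has nonzero determinant $(d-1)^{n+2}-(-1)^{n+2}$), one reduces to the situation where $F$ equals the Klein form plus a linear combination of other $\varphi$-invariant monomials of degree~$d$.

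The remaining task is to show no such further monomial $\prod x_i^{a_i}$ (with $\sum a_i=d$) can occur. Setting $M:=\sum a_i(1-d)^i$, the $\varphi$-invariance gives $M\equiv 0\mod p$, and I split on $\max_i a_i$. When $\max a_i=d$, $M=d(1-d)^j$ is coprime to~$p$. When $\max a_i=d-1$, the congruence reduces to $(1-d)^{\ell-j-1}\equiv 1\mod p$, which by the exact order of $1-d$ forces the Klein pattern. The main case is $\max a_i\leq d-2$: I use the identity $p=\sum_{i=0}^{n+1}(1-d)^i$ (for $n+2$ prime; for $n=2$ one uses the pair $\sigma_0+\sigma_2\equiv\sigma_1+\sigma_3\equiv 0 \mod p$) to rewrite $M=kp$ as the integer equation $\sum(a_i-k)(1-d)^i=0$. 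When $k\in\{0,\ldots,d-2\}$ the shifted digits lie in $[-(d-2),d-2]$, so Lemma~\ref{li} forces $a_i=k$ for all~$i$, whence $(n+2)k=d$; but $(n+2)\mid d$ would give $(n+2)\mid\Phi_{n+2}(1-d)=p$, contradicting $p$ prime and $p>n+2$.

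The main obstacle I expect is ruling out $k\notin\{0,\ldots,d-2\}$. My plan is to establish sharp upper bounds on $M_+:=\sum_{i\text{ even}}a_i(d-1)^i$ and $M_-:=\sum_{i\text{ odd}}a_i(d-1)^i$ subject to $a_i\leq d-2$ and $\sum a_i=d$, and verify that $\max M_+<(d-1)p$ and $\max M_-<p$, so that $|k|\leq d-2$ automatically. Using $dp=(d-1)^{n+2}+1$ (for $n+2$ an odd prime), this reduces to the elementary inequality $(d-1)^{n-2}(d^2-4d+1)+1>0$, immediate for $d\geq 4$ but requiring a tighter combinatorial bound at $d=3$, where the constraint $a_i\leq 1$ changes the extremal configurations and $M_\pm$ must be bounded by the sum of powers of $2$ indexed by the top few available indices of each parity. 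The case $n=2$ needs an ad hoc bound built directly from the two relations, tracking $u=a_0-a_2$ and $v=a_1-a_3$ and observing that $u+(1-d)v\equiv 0 \mod p$ with $|u|,|v|\leq d-2$ already forces $|k|\leq 1$, which is then dispatched by inspection. Once these bounds are in place, $F$ equals the Klein form and $X$ is isomorphic to the Klein hypersurface.
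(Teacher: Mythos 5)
Your proposal is correct and follows essentially the same route as the paper: normalize the signature to $(1,1-d,\ldots,(1-d)^{n+1})$ using that $1-d$ has exact order $n+2$ modulo $p$, classify the $\varphi$-invariant degree-$d$ monomials via uniqueness of base-$(1-d)$ digits (Lemma~\ref{li}), and conclude with Lemma~\ref{base} and a diagonal rescaling. The only real difference is bookkeeping: the paper substitutes $\alpha_{n+1}=d-\sum_{i\le n}\alpha_i$ and passes to the partial sums $\beta_i=\alpha_0+\cdots+\alpha_i$, factoring $L=d\cdot M$ and exploiting the monotonicity $0\le\beta_0\le\cdots\le\beta_n\le d$, which replaces your explicit parity bounds on $M_{\pm}$ (and the delicate $d=3$ and $n=2$ estimates you flag) in controlling the multiplier $k$.
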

\begin{proof}
  Since $p>(d-1)^n$, by Corollary~\ref{bound}, $p$ is not admissible
  in dimension $n-1$ and degree $d$. Hence, by Remark \ref{signa}, we
  can assume that $\varphi(F)=F$ and $\varphi=\diag(\sigma)$, where
  $$\sigma=(\sigma_0,\ldots,\sigma_{n+1})=
  \big(1,(1-d),(1-d)^2,\ldots,(1-d)^{n+1}\big)\,.$$

  The Klein hypersurface defined by the form in \eqref{klein-eq}
  admits the automorphism $\varphi$ above. This together with
  Lemma~\ref{pmax} proves the ``if'' part.
  
  Assume now that $X=V(F)$ is a smooth hypersurface of dimension $n$
  and degree $d$ admitting the automorphism $\varphi$ of prime order
  $p>(d-1)^n$. Let $\EE\subset S^d(V^*)$ be the eigenspace associated
  to the eigenvalue 1 of the linear automorphism
  $\varphi:S^d(V^*)\rightarrow S^d(V^*)$, so that $F\in\EE$. In the
  following we compute a basis for $\EE$. Let $\mathbf{x}^\alpha$ be a
  monomial in $S^d(V^*)$ i.e.
  $$\mathbf{x}^\alpha:=x_0^{\alpha_0}\cdots
  x_{n+1}^{\alpha_{n+1}},\quad \sum_{i=0}^{n+1} \alpha_i=d,\mbox{ and
  }\alpha_i\geq 0\,.$$
  
  We have 
  $$\mathbf{x}^\alpha\in \EE\Leftrightarrow L:=
  \alpha_0+\alpha_1(1-d)+\ldots+\alpha_{n+1}(1-d)^{n+1}\equiv 0 \mod
  p\,.$$ Since $\alpha_{n+1}=d-\sum_{i=0}^n\alpha_i$, we have
  \begin{align*}
    L&=d(1-d)^{n+1}+\sum_{i=0}^n \alpha_i\left((1-d)^i-(1-d)^{n+1}\right)\\
     &=d(1-d)^{n+1}+d\cdot\sum_{i=0}^n \alpha_i\left((1-d)^i+\ldots+(1-d)^{n}\right)\,.
  \end{align*}
  Letting $\beta_i=\sum_{j=0}^n \alpha_j$, for all $0\leq i\leq n$, we
  have and $0\leq \beta_i\leq \beta_j\leq d$, for all $i<j$, and
  $$L=d\cdot M,\quad\mbox{where}\quad M=\beta_0+\beta_1(1-d)+
  \ldots+\beta_{n}(1-d)^{n}+(1-d)^{n+1}\,.$$ %
  Since $d$ is invertible in $\ZZ_p$ we have
  $$\mathbf{x}^\alpha\in\EE \Leftrightarrow L\equiv 0 \mod p\Leftrightarrow M\equiv 0 \mod p\,.$$
  
  By Lemma~\ref{pmax} we know that $p=\Phi_{n+2}(1-d)$ and $n=2$ or
  $n+2$ is prime. We divide the proof in two cases. \medskip

  \noindent \textbf{Case $\pmb{n+2}$ is prime:} In this case
  $p=1+(1-d)+\ldots+(1-d)^{n+1}$.  If $\beta_n<d-1$ then $M=p$ and
  Lemma~\ref{li} shows that $\beta_i=1$, $\forall i$. This corresponds
  to $\mathbf{x}^\alpha=x_{n+1}^{d-1}x_0$.

  If $\beta_n=d-1$ then
  $M=0$ and Lemma~\ref{li} shows that $\beta_i=0$, $\forall i<n$. This
  corresponds to $\mathbf{x}^\alpha=x_{n}^{d-1}x_{n+1}$.

  If $\beta_j=d$, $\forall j>k+1$ and $\beta_k<d$, for some $k<n$ then
  $$M\equiv \beta_0+\ldots+\beta_k(1-d)^k+(1-d)^{k+1}\mod p$$
  This gives $\beta_k=(d-1)$ and $\beta_i=0$, for all $i<k$. This
  corresponds to $\mathbf{x}^\alpha=x_{k}^{d-1}x_{k+1}$.
  
  Hence, $\EE=\langle 
  x_{n+1}^{d-1}x_0,x_k^{d-1}x_{k+1}; 0\leq k\leq n\rangle$ and
  $$F=a_0\cdot x_0^{d-1}x_1+a_1\cdot x_1^{d-1}x_2+\ldots+a_n
  \cdot x_n^{d-1}x_{n+1}+a_{n+1}\cdot x_{n+1}^{d-1}x_0\,.$$ %
  Since $X=V(F)$ is smooth, by Lemma~\ref{base} $a_i\neq 0$, $\forall
  i$ and applying a linear change of coordinates we can put
  $$F=x_0^{d-1}x_1+x_1^{d-1}x_2+x_2^{d-1}x_3+x_3^{d-1}x_0\,.$$

  \noindent \textbf{Case $\pmb{n=2}$:} In this case $p=(d-1)^2+1$ and
  so $M=(\beta_0-\beta_2)+(\beta_1-1)(1-d)$. If $M=0$ then
  $\beta_0=\beta_1=\beta_2=1$, or $\beta_0=\beta_1=0$ and
  $\beta_2=d-1$. This corresponds to $\mathbf{x}^\alpha=x_3^{d-1}x_0$
  and $\mathbf{x}^\alpha=x_2^{d-1}x_3$, respectively.

  If $M=p$ then $\beta_0=d-1$ and $\beta_1=\beta_2=d$, or $\beta_0=0$,
  $\beta_1=d-1$ and $\beta_2=d$. This corresponds to
  $\mathbf{x}^\alpha=x_0^{d-1}x_1$ and
  $\mathbf{x}^\alpha=x_1^{d-1}x_2$, respectively.

  Hence, $\EE=\langle
  x_0^{d-1}x_1,x_1^{d-1}x_2,x_2^{d-1}x_3,x_3^{d-1}x_0\rangle$ and
  $$F=a_0\cdot x_0^{d-1}x_1+a_1\cdot x_1^{d-1}x_2+
  a_2\cdot x_2^{d-1}x_3+a_3\cdot x_3^{d-1}x_0\,.$$
  With the same argument as above, we can apply a linear change of
  coordinates to put
  \begin{align*}
    F=x_0^{d-1}x_1+x_1^{d-1}x_2+x_2^{d-1}x_3+x_3^{d-1}x_0\,.\tag*{\qedhere}
  \end{align*}
\end{proof}

Let now $(n,d)$ be a pair satisfying the condition of
Theorem~\ref{kuni} and let $\varphi$ be the automorphism of order
$p=\Phi_{n+2}(1-d)$ of the Klein hypersurface $X=V(F)$. In the
remaining of this section, we study the geometry of the action of the
cyclic group $\langle\varphi\rangle\simeq \ZZ/p\ZZ$ on $X$.

Recall first that $\varphi=\diag(\sigma)$, where $\sigma=
\left(1,(1-d),(1-d)^2,\ldots,(1-d)^{n+1}\right)$. Since
$\sigma_i\not\equiv\sigma_j\mod p$ for all $i\neq j$, the only fixed
points of $\varphi$ are the images of the $n+2$ standard basis
vectors of $V=\CC^{n+2}$ in $\PP^{n+1}$.

For our next result, we say that a cyclic quotient singularity is of
type $\tfrac{1}{p}(a_1,\ldots,a_n)$ if it is locally isomorphic to the
singularity at the vertex of the quotient of $\CC^{n}$ by $\ZZ/p\ZZ$,
where the $(\ZZ/p\ZZ)$-action is generated by the automorphism
$$(x_1,\ldots x_n)\mapsto (\xi^{a_1}x_1,\ldots,\xi^{a_n}x_n),
\quad\mbox{with}\quad \xi^p=1\mbox{ and } \xi\neq 1\,.$$

\begin{proposition} \label{singular}
  Let $n,d,p$ be as in Theorem~\ref{kuni}. The quotient
  $Y=X/(\ZZ/p\ZZ)$ of the Klein hypersurface by the cyclic group
  generated by $\varphi$ has $n+2$ singular points of singularity type
  $\tfrac{1}{p}\left((1-d)^2-1,\ldots,(1-d)^{n+1}-1\right)$.
\end{proposition}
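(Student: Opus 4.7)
The plan is to locate the singular points of $Y = X/\langle\varphi\rangle$, check that $X$ is smooth at their preimages, and read off the weights of the $\ZZ/p\ZZ$-action on each tangent space.

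First, I would identify the fixed locus of $\varphi$. Since $p$ is prime, any stabilizer is either trivial or all of $\langle\varphi\rangle$, so the non-smooth points of $Y$ correspond exactly to the fixed points of $\varphi$ on $X$. The key input is that $1-d$ has order exactly $n+2$ in $(\ZZ/p\ZZ)^*$: this follows from $p = \Phi_{n+2}(1-d)$ together with $\gcd(p,d)=1$ (established in the proof of Corollary~\ref{bound}), which rules out the only divisor of $n+2$ (namely $1$ when $n+2$ is prime, and $1,2$ when $n=2$, since $(1-d)^2 \equiv -1 \not\equiv 1 \mod p$). Hence the components of $\sigma$ are pairwise distinct mod $p$, so the fixed locus of $\varphi$ in $\PP^{n+1}$ consists of the $n+2$ standard basis points $e_0,\ldots,e_{n+1}$. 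A direct substitution shows each $e_i \in X$, since every monomial of $F$ involves at least two distinct variables.

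Second, I would set up local coordinates on $X$ at each $e_i$. In the affine chart $x_i = 1$,
$$\frac{\partial F}{\partial x_j} = (d-1)x_j^{d-2}x_{j+1} + x_{j-1}^{d-1}$$
(indices mod $n+2$) evaluated at $e_i$ equals $\delta_{j,i+1}$. By the implicit function theorem, $x_{i+1}$ may be eliminated, and the remaining variables $x_j$ with $j \in J_i := \{0,\ldots,n+1\}\setminus\{i,i+1\}$ form local coordinates on $X$ at $e_i$. In the chart $x_i=1$, the automorphism $\varphi$ acts by $x_j \mapsto \xi^{(1-d)^j - (1-d)^i}x_j$, so the weights of the induced $\ZZ/p\ZZ$-action on $T_{e_i}X$ are
$$w_j^{(i)} = (1-d)^j - (1-d)^i \pmod{p}, \qquad j \in J_i.$$
None of these is zero mod $p$ (again by the order of $1-d$), so the representation is faithful and the image of $e_i$ in $Y$ is a genuine cyclic quotient singularity of type $\frac{1}{p}(w_{j_1}^{(i)},\ldots,w_{j_n}^{(i)})$.

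Finally, I would normalize to see that all $n+2$ singularities have the stated common type. The singularity type $\frac{1}{p}(a_1,\ldots,a_n)$ is unchanged if all weights are multiplied by a common unit $c \in (\ZZ/p\ZZ)^*$ (this amounts to changing the generator of the cyclic group to $\varphi^c$). Multiplying the weights $w_j^{(i)} = (1-d)^i\bigl((1-d)^{j-i}-1\bigr)$ by $(1-d)^{-i}$ gives $(1-d)^{j-i}-1$, and since $(1-d)^{n+2}\equiv 1\mod p$ we may reduce $j-i$ modulo $n+2$. As $j$ ranges over $J_i$, the residues $j-i \bmod (n+2)$ range over $\{2,3,\ldots,n+1\}$, yielding the advertised type $\frac{1}{p}\bigl((1-d)^2-1,\ldots,(1-d)^{n+1}-1\bigr)$. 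The only mildly delicate point is confirming the order of $1-d$ mod $p$ so that both the distinctness of the $\sigma_i$ and the reduction $j-i \bmod (n+2)$ are justified; everything else is a routine local calculation.
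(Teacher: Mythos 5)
Your argument is correct, and it diverges from the paper's proof in one organizational respect worth noting. The paper computes the weights only at the single fixed point $(1:0:\cdots:0)$ and then invokes the extra automorphism $\psi:(x_0:\cdots:x_{n+1})\mapsto(x_1:\cdots:x_{n+1}:x_0)$ of the Klein hypersurface, which permutes the $n+2$ fixed points transitively and hence forces all the singular points of $Y$ to have the same type. You instead compute the weights at every $e_i$ and then observe that the type $\tfrac{1}{p}(a_1,\ldots,a_n)$ is invariant under multiplying all weights by a unit (replacing $\varphi$ by $\varphi^c$), so that rescaling by $(1-d)^{-i}$ and reducing exponents modulo $n+2$ identifies all $n+2$ local types with the stated one. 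Your route is more self-contained (it never uses $\psi$) and makes explicit the needed fact that $1-d$ has exact order $n+2$ in $(\ZZ/p\ZZ)^*$, which the paper leaves implicit; the paper's route is shorter. Two small points: your parenthetical should say "proper divisor" of $n+2$, and the final assertion that the image of $e_i$ is genuinely singular should be justified not by faithfulness alone but by the fact that no power of the induced linear map is a pseudo-reflection --- which does follow from your computation, since all $n\geq 2$ tangent weights are nonzero mod $p$ (this is exactly the remark the paper makes at the end of its proof).
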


\begin{proof}
  The set of singular points of $Y$ is contained in the image under
  the quotient map of the set of fixed points of the
  $(\ZZ/p\ZZ)$-action on $X$ given by $\varphi$. Furthermore, the
  Klein hypersurface $X$ admits the automorphism
  $$\psi:\PP^{n+1}\rightarrow \PP^{n+1},\quad (x_0:\ldots:
  x_{n+1})\mapsto (x_1:\ldots :x_{n+1}:x_0)\,,$$ %
  and since the orbit of the fixed point $\alpha=(1:\ldots:0)$ by
  $\langle\psi\rangle$ is the $n+2$ fixed points, the singularity type
  of all the singular points of $Y$ is the same.

  To compute the singularity type of the image of the point $\alpha$ in
  $Y$, we pass to the invariant affine open set $U=\{x_0\neq
  1\}\simeq\CC^{n+1}$ in $\PP^{n+1}$ with coordinates $x_1,\ldots
  x_{n+1}$. Now, the fixed point $\alpha$ corresponds to $\bar{0}\in U$,
  the Klein hypersurface $X|_U$ is given by the equation
  $$x_1+x_1^{d-1}x_2+\ldots+x_n^{d-1}x_{n+1}+x_{n+1}^{d-1}=0\,, $$
  and the automorphism $\varphi|_U$ is given by
  $$\varphi|_U=\diag\left((1-d)-1,(1-d)^2-1,\ldots,(1-d)^{n+1}-1\right)\,.$$

  Since $\alpha\in X|_U$ is a smooth point, the singularity type of
  the image of $\alpha$ in the quotient, is the same as the
  singularity type at the image of the origin of the quotient of the
  tangent space $T_\alpha X$ by the linear action
  $\widetilde{\varphi}$ induced by $\varphi$. The tangent space
  $T_\alpha X$ is given by $\{x\in \CC^{n+1}\mid x_1=0\}\simeq\CC^n$
  and
  $$\widetilde{\varphi}=\diag\left((1-d)^2-1,\ldots,(1-d)^{n+1}-1\right)\,.$$
  Hence, the singularity type of the image of $\alpha$ is
  $\tfrac{1}{p}\left((1-d)^2-1,\ldots,(1-d)^{n+1}-1\right)$. To
  complete the proof, we remark that $\widetilde{\varphi}$ is not a
  pseudo-reflection and so the image of $p$ is indeed a singular
  point.
\end{proof}

A cyclic quotient singularity is always Cohen-Macaulay and rational
but not necessarily Gorenstein. By \cite{watanabe}, a cyclic quotient
singularity $\CC^n/G$ is Gorenstein if and only if the acting group
$G$ is a subgroup of $\operatorname{SL}(\CC^n)$. In the following
corollary, we apply this result to prove that the singularities of $Y$
are never Gorenstein.

\begin{corollary}
  Let $n,d,p$ be as in Theorem~\ref{kuni}. Then the singularities of
  $Y=X/(\ZZ/p\ZZ)$ are not Gorenstein.
\end{corollary}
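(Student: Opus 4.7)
The plan is to apply Watanabe's criterion quoted just before the corollary: the cyclic quotient singularity $\tfrac{1}{p}((1-d)^2-1,\ldots,(1-d)^{n+1}-1)$ established in Proposition~\ref{singular} is Gorenstein if and only if
$$S \,:=\, \sum_{i=2}^{n+1}\bigl((1-d)^i - 1\bigr) \,\equiv\, 0 \mod p\,.$$
The task therefore reduces to evaluating $S$ modulo $p$ and showing it is nonzero.

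For the computation, I would use the factorization $t^{n+2}-1=\prod_{r\mid n+2}\Phi_r(t)$ recalled at the beginning of Section~\ref{sec-klein}. Combined with $\Phi_1(1-d)=-d$ and the hypothesis $p=\Phi_{n+2}(1-d)$, this gives
$$\sum_{i=0}^{n+1}(1-d)^i \;=\; \frac{(1-d)^{n+2}-1}{-d} \;=\; \prod_{\substack{r\mid n+2 \\ r\neq 1}}\Phi_r(1-d)\,,$$
which is an integer divisible by $p$. Splitting off the $i=0$ and $i=1$ terms gives $\sum_{i=2}^{n+1}(1-d)^i\equiv d-2\mod p$, and subtracting $n$ yields $S\equiv d-n-2\mod p$. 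A routine bound, using $p>(d-1)^n\geq 2^n$ together with $n\geq 2$ and $d\geq 3$, shows $|d-n-2|<p$, so $S\equiv 0\mod p$ forces the equality $d=n+2$.

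The main obstacle is then to exclude the case $d=n+2$ under the hypotheses of Theorem~\ref{kuni}. If $n=2$ this would force the excluded pair $(n,d)=(2,4)$. Otherwise $q:=n+2$ is prime with $q\geq 5$ (since $n\geq 2$ excludes $q=3$), and setting $d=q$ gives
$$p \;=\; \Phi_q(1-q) \;=\; \sum_{i=0}^{q-1}(1-q)^i \;\equiv\; q \;\equiv\; 0 \mod q\,,$$
since $1-q\equiv 1\mod q$. Thus $q\mid p$, while the elementary bound $p=\tfrac{(q-1)^q+1}{q}\geq\tfrac{4^5+1}{5}=205>q$ shows that $p$ is a proper multiple of $q$, contradicting the primality of $p$. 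Hence $d\neq n+2$, so $S\not\equiv 0\mod p$ and the singularities of $Y$ are not Gorenstein.
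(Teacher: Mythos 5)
Your proof is correct and follows essentially the same route as the paper: Watanabe's criterion reduces the question to whether $\sum_{i=2}^{n+1}\bigl((1-d)^i-1\bigr)\equiv 0 \bmod p$, this sum is congruent to $d-n-2$ modulo $p$, and the residual case $d=n+2$ is then excluded. In fact your treatment of the last step is the more reliable one: the paper asserts that $d=n+2$ forces $n\mid p$, which is not what the computation gives (for $n=3$, $d=5$ one has $\Phi_5(-4)=205$, which is not divisible by $3$) --- the correct divisor is $n+2$, exactly as you derive, and you also make explicit the bound $|d-n-2|<p$ and the $n=2$ subcase that the paper leaves implicit.
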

\begin{proof}
  Since the singular points of $Y$ are of type
  $\tfrac{1}{p}\left((1-d)^2-1,\ldots,(1-d)^{n+1}-1\right)$, we only
  have to show that the automorphism of $\CC^n$ given by
  $\diag\left((1-d)^2-1,\ldots,(1-d)^{n+1}-1\right)$ does not belong
  to $\operatorname{SL}(\CC^n)$. This happens if and only if
  $$(1-d)^2-1+\ldots+(1-d)^{n+1}-1\not\equiv 0\mod p\,.$$
  Since $p=\Phi_{n-2}(1-d)$, we have
  $(1-d)^2-1+\ldots+(1-d)^{n+1}-1=p+d-n-2\,.$ and so $Y$ has
  Gorenstein singularities if and only if $d=n+2$. Finally, a
  straightforward computation shows that if $d=n+2$, then $n$ divides
  $\Phi_{n+2}(1-d)=p$, which provides a contradiction.
\end{proof}

\begin{remark}
  The condition $p>(d-1)^n$ coming from Theorem~\ref{kuni} in the
  above corollary is essential. Indeed, if we let $n=3$ and $d=5$,
  then the Klein hypersurface $X$ admits an automorphism $\varphi$ of
  order $p=41<(d-1)^n=64$ with the same signature $\sigma$ as in
  Theorem~\ref{kuni}, $$\sigma=
  \left(1,(1-d),(1-d)^2,\ldots,(1-d)^{n+1}\right)=(1,37,16,18,10)\,.$$

  By the proof Theorem~\ref{kuni}, the singular points of quotient of
  $X$ by $\langle\varphi\rangle$ are of type
  $\tfrac{1}{p}\left((1-d)^2-1,\ldots,(1-d)^{n+1}-1\right)=
  \tfrac{1}{41}(15,17,9)$. Now, this quotient singularity is
  Gorenstein since $15+17+9=41\equiv 0\mod 41$.
\end{remark}

\section{An application to intermediate jacobians of Klein
  hypersurfaces}
\label{sec-app}

Letting $(n,d)$ satisfy Theorem~\ref{kuni} we let $X$ be the Klein
hypersurface of dimension $n$ and degree $d$. Let also $\varphi$ be
the automorphism of $X$ of prime order $\Phi_{n+2}(1-d)$. In
\cite{shioda} Shioda studies the action of $\varphi$ on the Lieberman
Jacobian of $X$. In particular, the author proves that the spectrum of
the action of $\varphi$ in $H^n(X,\CC)$ is composed by all the
primitive roots of unity with multiplicity one. We remark that the
``naive'' question \cite[Section 3 (b)]{shioda} is not so naive by our
Remark~\ref{infinite} (ii). Here, we study in more detail the
particular cases where the intermediate jacobian $\JJ(X)$ admits a
principal polarization.

Let $X$ be a smooth hypersurface of degree $d$ of $\PP^{n+1}$. It is
known \cite[Page 22]{deligne}, that the intermediate jacobian $\JJ(X)$
is a non trivial principally polarized abelian variety (p.p.a.v.) if
and only if $n=1$ and $d\geq 3$, $n=3$ and $d=3,4$, or $n=5$ and
$d=3$. In this case, the dimension of the cohomology $H^n(X,\CC)$ is
given by \cite{voisin}
\begin{align*}
  \dim H^n(X,\CC)=\frac{(d-1)^{n+2}+(-1)^n(d-1)}{d}\,.
\end{align*}

It is also possible using the residue calculus of Griffiths to give an
explicit representation of $H^n(X,\CC)$. Indeed, let $X=V(F)$
where $F$ is a form of degree $d$ and let $S^{l}=H^0({\mathbb
  P}^{n+1},{\mathcal O}_{{\mathbb P}^{n+1}}(l))$, for all $l\geq 0$ so
that $S=\bigoplus_l S^l$ is the polynomial ring in $n+2$ variables. We
also let $J_F=\bigoplus_l J_F^l$ the homogeneous jacobian ideal of $F$
generated by the partial derivatives $\frac{\partial F}{\partial
  x_i}$, and $R_F^l=S^l/J_F^l$ be the $l$-component of the jacobian
ring $R_F=S/J_F$. With all this definitions, we have
\cite{voisin}
\begin{align} \label{victor-eq} %
  H^{n+1-r,r-1}(X, \CC) \simeq R_F^{rd-n-2}\,.
\end{align}

\medskip

Let now $X$ be the Klein hypersurface given by $V(F)$, where $F$ is as
in \eqref{klein-eq}. In the following, we will study the
p.p.a.v. $\JJ(X)$ in the case where $(n,d)=(3,3)$, $(n,d)=(3,4)$, and
$(n,d)=(5,3)$. Letting $k=\frac{n-1}{2}$ we let $\iota$ be the
canonical injection $\iota:H^n(X,\ZZ)\hookrightarrow H^{k,k+1}(X,\CC)$
so that
$$\JJ(X)=H^{k,k+1}(X,\CC)/\iota(H^n(X,\ZZ))\,.$$

By Theorem~\ref{kuni}, $X$ admits an automorphism $\varphi$ of order
$p=11$, $p=61$, and $p=43$, respectively. The automorphism $\varphi$
preserves the Hodge structure of $X$ and since this Hodge structure
is of level $1$, $\varphi$ induces an automorphism
$\widetilde{\varphi}$ of the p.p.a.v. $\mathcal J(X)$.

Let $\AAA_g$ be the moduli space of p.p.a.v. of dimension $g$ and let
$\sing\AAA_g$ its singular locus. If a prime number $p$ is the order
of an automorphism $A\in \AAA_g$, then $p\leq 2g+1$. In the case where
$p=2g+1$, the p.p.a.v. $A$ is called extremal and corresponds to
$0$-dimensional irreducible components of $\sing\AAA_g$ in the sense
of \cite{victor}. As we will see below, $\mathcal J(X)$ is extremal.

\subsection{The cubic Klein threefold}
This is the case where $(n,d)=(3,3)$. In this case $\mathcal J(X)$ is
p.p.a.v of dimension $5$ and since $\widetilde{\varphi}$ is of order
$11$, $\mathcal J(X)$ is indeed extremal.

By \eqref{victor-eq} for $r=3$ we have $H^{1,2}(X,\CC) \simeq R_F^4$,
and a basis for $H^{1,2}(X,\CC)$ via this isomorphism is given
by
$$\{x_1x_2x_3x_4,x_0x_2x_3x_4,x_0x_1x_3x_4,x_0x_1x_2x_4,
x_0x_1x_2x_3 \}\,.$$

The automorphism $\varphi$ of order $p=11$ of the Klein hypersurface
is given by $\diag(1,9,4,3,5)$. Hence, the spectrum of the induced
isomorphism $\overline{\varphi}: T_0\JJ(X)\simeq H^{1,2}(X,\CC)
\rightarrow T_0\JJ(X)$ is given by $C=\{{\xi}^{10}, {\xi}^2, {\xi}^7,
{\xi}^8, {\xi}^6 \}$. Since $C\cap \overline{C}=\emptyset$ and $C\cup
\overline{C}$ corresponds to all the primitive $11$-th roots of unity,
$\JJ(X)$ is p.p.a.v. of complex multiplication type \cite{bennama} and
is a $0$-dimensional component of the singular locus of ${\mathcal
  A}_5$ \cite{victor}.

Furthermore $\psi(a)=a^{5}$ stabilize the complex multiplication type
$C$ of $\JJ(X)$ and induce a permutation of coordinates on $T_0\JJ(X)$
given by $\sigma=(10,6,8,7,2)$. Thus, $\JJ(X)$ is contained in a
component of $\sing\AAA_{5}$ corresponding to p.p.a.v. admitting the
automorphisms $\sigma$ of order $5$. Let us denote this component by
$\AAA_{5}(5,{\sigma})$.

The spectrum of $\sigma$ is $\{{\zeta}^0,{\zeta}^1 ,{\zeta}^2,
{\zeta}^3,{\zeta}^4 \}$, where $\zeta$ is a primitive $5$-th root of
unity. Then it follows from \cite[Page 1]{victor} that
$\AAA_{5}(5,{\sigma})$ is a $3$-dimensional subvariety of $\sing
\AAA_{5}$ that contain $\JJ(X)$.

\subsection{The cubic Klein fivefold}
This is the case where $(n,d)=(5,3)$. In this case $\mathcal J(X)$ is
p.p.a.v of dimension $21$ and since $\widetilde{\varphi}$ is of order
$43$, $\mathcal J(X)$ is indeed extremal.

By \eqref{victor-eq} for $r=3$ we have $H^{2,3}(X,\CC) \simeq R_F^2$,
and a basis for $H^{2,3}(X,\CC)$ via this isomorphism is given by
$\{x_ix_j\in S^2(V^{*}) \mid 0\leq i<j\leq 6\}$. The automorphism
$\varphi$ of order $p=43$ of the Klein hypersurface $X$ is given by
$$\varphi=\diag(1,41,4,35,16,11,21)\,.$$ %
Hence, the spectrum of the induced isomorphism $\overline{\varphi}:
T_0\JJ(X)\simeq H^{2,3}(X,\CC) \rightarrow T_0\JJ(X)$ is given by
$$C=\{{\xi}^2, {\xi}^3, {\xi}^5, {\xi}^8, {\xi}^9, {\xi}^{12},
{\xi}^{13}, {\xi}^{14},{\xi}^{15}, {\xi}^{17}, {\xi}^{19}, {\xi}^{20},
{\xi}^{22}, {\xi}^{25}, {\xi}^{27}, {\xi}^{32},{\xi}^{33}, {\xi}^{36},
{\xi}^{37}, {\xi}^{39}, {\xi}^{42} \}\,.$$%
Since $C\cap
\overline{C}=\emptyset$ and $C\cup \overline{C}$ corresponds to all
the primitive $43$-th roots of unity, $\JJ(X)$ is p.p.a.v. of complex
multiplication type \cite{bennama} and is a $0$-dimensional component
of the singular locus of $\AAA_{21}$ \cite{victor}.

Furthermore $\psi(a)=a^{11}$ stabilize the complex multiplication type
$C$ of $\JJ(X)$ and induce a permutation of coordinates on $T_0\JJ(X)$
of order $7$ given by
$$\sigma=(2,22,27,39,42,32,8) (3,33,19,37,20,5,12)(9,13,14,25,17,15,36)$$
Thus, $\JJ(X)$ is contained in a component of $\sing\AAA_{21}$
corresponding to p.p.a.v. admitting the automorphisms $\sigma$ of
order $7$. Let us denote this component by $\AAA_{21}(7,{\sigma})$.

The spectrum of $\sigma$ is 
$$\{\zeta^0,\zeta^0,\zeta^0,\zeta^1,\zeta^1,\zeta^1,\zeta^2,\zeta^2,
\zeta^2,\zeta^3,\zeta^3,\zeta^3,\zeta^4,\zeta^4,\zeta^4,\zeta^5,
\zeta^5,\zeta^5,\zeta^6,\zeta^6,\zeta^6\}\,,$$ %
where $\zeta$ is a primitive $7$-th root of unity. Then it follows
from \cite[Page 1]{victor} that $\AAA_{21}(7,{\sigma})$ is a
$33$-dimensional subvariety of $\sing \AAA_{21}$ that contain $\JJ(X)$.

\subsection{The quartic Klein threefold}

This is the case where $(n,d)=(3,4)$. In this case $\mathcal J(X)$ is
p.p.a.v of dimension $30$ and since $\widetilde{\varphi}$ is of order
$61$, $\mathcal J(X)$ is indeed extremal.

By \eqref{victor-eq} for $r=3$ we have $H^{1,2}(X,\CC) \simeq R_F^3$,
and a basis for $H^{1,2}(X,\CC)$ via this isomorphism is given by
$$\{x_ix_jx_k\in S^3(V^{*}) \mid 0\leq i\leq j\leq k\leq 4,
\mbox{ and } i\neq k\}\,.$$

The automorphism $\varphi$ of order $p=61$ of the Klein hypersurface
$X$ is given by $\varphi=\diag(1,58,9,34,20)$.  Hence, the spectrum of
the induced isomorphism $$\overline{\varphi}: T_0\JJ(X)\simeq
H^{1,2}(X,\CC) \rightarrow T_0\JJ(X)$$ is given by
\begin{align*}
  \{&\xi^{60}, \xi^{11}, \xi^{36}, \xi^{22}, \xi^{56}, \xi^{7},
  \xi^{32}, \xi^{18},\xi^{19}, \xi^{44}, \xi^{30},
  \xi^{8}, \xi^{55}, \xi^{41}, \xi^{3}  \\
  &\xi^{28},\xi^{14}, \xi^{15}, \xi^{40}, \xi^{26}, \xi^{4}, \xi^{51},
  \xi^{37},\xi^{52}, \xi^{38}, \xi^{16}, \xi^{2}, \xi^{49}, \xi^{27},
  \xi^{13} \}\,.
\end{align*}
Since $C\cap \overline{C}=\emptyset$ and $C\cup \overline{C}$
corresponds to all the primitive $61$-th roots of unity, $\JJ(X)$ is
p.p.a.v. of complex multiplication type \cite{bennama} and is a
$0$-dimensional component of the singular locus of $\AAA_{30}$
\cite{victor}.

Furthermore $\psi(a)=a^{9}$ stabilize the complex multiplication type
$C$ of $\JJ(X)$ and induce a permutation of coordinates on $T_0\JJ(X)$
of order $5$ given by
\begin{align*}
\sigma=(&2,18,40,55,7)(3,27,60,52,41)(4,36,19,49,14) \\
(&8,11,38,37,28)(13,56,16,22,15)(26,51,32,44,30)\,.
\end{align*}
Thus, $\JJ(X)$ is contained in a component of $\sing\AAA_{30}$
corresponding to p.p.a.v. admitting the automorphisms $\sigma$ of
order $5$. Let us denote this component by $\AAA_{30}(5,{\sigma})$.

The spectrum of $\sigma$ is 
$$\{\overbrace{\zeta^0,\ldots,\zeta^0}^{\mbox{6 times}},
\overbrace{\zeta^1,\ldots,\zeta^1}^{\mbox{6 times}},
\overbrace{\zeta^2,\ldots,\zeta^2}^{\mbox{6 times}},
\overbrace{\zeta^3,\ldots,\zeta^3}^{\mbox{6 times}},
\overbrace{\zeta^4,\ldots,\zeta^4}^{\mbox{6 times}}\}\,,$$ %
where $\zeta$ is a primitive $5$-th root of unity. Then it follows
from \cite[Page 1]{victor} that $\AAA_{30}(5,{\sigma})$ is a
$93$-dimensional subvariety of $\sing \AAA_{30}$ that contain
$\JJ(X)$.


\begin{thebibliography}{cewal}

\bibitem{bennama} Bennama, H. and Bertin, J. \textit{Remarque sur les
    vari\'et\'es ab\'eliennes avec un automorphisme d'ordre
    premier}. Manuscripta Math. {\bf 94} (1997), 409--425.
  
\bibitem{deligne} Deligne, P. \textit{Cohomologie des intersections
    complètes}. Groupes de Monodromie en G\'eom\'etrie Algebrique. SGA
  7 II, Exposé XI. Springer Lectures Notes. {\bf 340}, 1973.

\bibitem{GL} Gonz\'alez-Aguilera, V. and Liendo, A. \textit
  {Automorphisms of prime order of smooth cubic
    $n$-folds}. Arch. Math, {\bf 97} 2011, 25--37.

\bibitem{victor} Gonz\'alez-Aguilera, V., Mu\~{n}oz-Porras, J. and
  R. Zamora, A. \textit{On the $0$-dimensional irreducible
    components of the singular locus of ${\mathcal A}_g$}. Archiv der
  Mathematik, {\bf 84}, (2005), 298--303.

\bibitem{gorinov} Gorinov, A.G. \textit{Division theorems for the
    rational cohomology of some discriminant
    complements}. arXiv:math/0511593v1 [math.AG], 27 pages.
  
\bibitem{klein} Klein, F. \textit{Uber die Ausflsung Gleichungen
    siebenten und achten Grades}. Math. Ann., {\bf 15}, (1879),
  251--282.

\bibitem{lang} Lang, S. \textit{Algebra}. Third edition. Graduate
  Texts in Mathematics {\bf 211}, Springer-Verlag, 2002.

\bibitem{mat} Matsumura, H. and Monsky, P. \textit{On the
    automorphisms of hypersurfaces}. J. Math. Kyoto Univ.  (1964),
  {\bf 3}, 347--361.

\bibitem{melham} Melham, R. \textit{Probable prime tests for
    generalized Mersenne numbers}. Bol. Soc. Mat. Mexicana, {\bf 14},
  (2008), no. 1, 7--14.

\bibitem{shioda} Shioda, T. \textit{A note on a theorem of Griffiths
    on the Abel-Jacobi map}.  Invent. Math., {\bf 82}, (1985), no. 3,
  461--465.
  
\bibitem{steidl} Steidl, G, H\"anler, M and Tasche, M. \textit{On a
    number-theoretic result of Zsigmondy in domains of quadratic
    integers}.  Arch. Math., Vol {\bf 54}, (1989), 30--39.

\bibitem{szabo} Szabo, E. \textit{Bounding automorphism groups}.
  Mathematische Annalen, {\bf 304}, (1996), 801--811.

\bibitem{voisin} Voisin, C. \textit{Th\'eorie de Hodge et
    g\'eom\'etrie alg\'ebrique complexe}. Cours sp\'ecialis\'es, {\bf
    10}, (2002) Soci\'et\'e Math\'ematique de France.

\bibitem{watanabe} Watanabe, K. \textit{Certain invariant subrings are
    Gorenstein. I, II}.  Osaka J. Math., {\bf 11}, (1974),
  1--8 and 379--388.

\bibitem{williams} Williams, H. C., Seah, E. \textit{Some primes of
    the form $(a^n-1)/(a-1)$}.  Math. Comp. 33 (1979), no. 148,
  1337--1342.

\bibitem{zsigmondy} Zsigmondy, K. \textit{Zur Theorie der
    Potenzreste}.  Monatsh. Math. Phys., {\bf 3}, (1892), 265--284.

  
\end{thebibliography}
\end{document}